\documentclass[a4paper,12pt]{article}
\oddsidemargin=16pt
\textwidth=15.2cm  % С€РёСЂРёРЅР° СЃРѕР±СЃС‚РІРµРЅРЅРѕ С‚РµРєСЃС‚Р° == textwidth-oddsidemargin
\topmargin=-1.2cm
\textheight=22.9cm   % РІС‹СЃРѕС‚Р° Р±РµР· РєРѕР»РѕРЅС‚РёС‚СѓР»РѕРІ Рё РЅРѕРјРµСЂРѕРІ
\flushbottom       % СЃС‚СЂР°РЅРёС†С‹ СЂР°РІРЅС‹Рµ РїРѕ РІС‹СЃРѕС‚Рµ

\usepackage[english]{babel}
\usepackage{hyperref}
\usepackage{indentfirst}
\usepackage{amsfonts,amssymb,amsthm,amsmath}
\sloppy
\theoremstyle{definition}
\newtheorem{De}{Definition}
\theoremstyle{theorem}
\newtheorem{Th}{Theorem}
\theoremstyle{lemma}
\newtheorem{Lm}{Lemma}
\theoremstyle{plain}
\newtheorem{Pl}{Proposition}
\theoremstyle{remark}

\theoremstyle{plain}

\newcounter{tmp}

\newcommand{\ir}{[0,T]\times \mathbb{R}^{n}}

\newcommand{\itp}[1]{\int_{[0,#1]\times P}}
\newcommand{\ittp}[2]{\int_{[#1,#2]\times P}}
\newcommand{\iou}{\int_{\Omega\times\mathcal{U}}}
\begin{document}
\title{Minimax Approach to First-Order Mean Field Games}
\author{Yurii Averboukh\footnote{Krasovskii Institute of Mathematics and Mechanics UrB RAS, S.~Kovalevskaya str. 16, Ekaterinburg, 620990, Russia, e-mail: ayv@imm.uran.ru, averboukh@gmail.com.}}
\date{}
\maketitle
\begin{abstract}
The paper is devoted to the first-order mean field game system in the case when the distribution of players can contain atoms. The proposed definition of a generalized solution is    based on the minimax approach to the Hamilton-Jacobi equation. We prove the existence of the generalized (minimax) solution of the mean filed game system using the Nash equilibrium in the auxiliary differential game with infinitely many identical players. We show that the minimax solution of the original system provide the $\varepsilon$-Nash equilibrium in the  differential game with finite number of players.
\end{abstract}
{\small \textbf{Keywords:} {Mean-field-games, Hamilton--Jacobi equations, minimax solution, Nash equilibrium, differential game with infinitely many players.}
% \PACS{PACS code1 \and PACS code2 \and more}
% \subclass{MSC code1 \and MSC code2 \and more}

\vspace{6pt}\noindent\textbf{AMS 2010 Subject Classification.} 35F31, 35F21, 49N70, 91A23, 91A10.}
\section{Introduction}
%The paper is devoted to the first-order mean field game (MFG) system of the general form.

The mean field game approach was proposed independently by J.-M. Lasry and P.-L. Lions (see \cite{Lions_Guent}, \cite{Lions01}--\cite{Lions}) and by M. Huang, R.P. Malham\'{e} and P. Caines (see \cite{Huang1}--\cite{Huang5}). It is used to describe the control process with the $N$ identical weakly coupled participants  by studying the limit case  $N\rightarrow\infty$. In the limit case the dynamics and outcome of each player depend on the state of the player, his control and the distribution of the players on the Euclidian space, while the distribution of players is determined by the dynamics. This leads to the MFG system consisting of two PDEs: Hamilton-Jacobi equation and kinetic/Kolmogorov equation. Hamilton--Jacobi equation describes the evolution of value function; the solution of kinetic equation determines the distribution of players' state.

Primary mean field games are studied for the second--order case. This case corresponds to stochastic control processes. Note that for stochastic case the mean field game theory is developed for the  nonlinear Markov processes of the general form~\cite{Kolokoltsev},~\cite{Kolokoltstov13}. For the information on recent progress of mean field games we refer to the survey \cite{Gomes} and references therein.

First-order mean field games were studied for the case when the distribution of players' states is absolutely continuous with respect to the Lebesgue measure (see \cite{Bagagiolo}, \cite{Cardal_mfg}, \cite{cardal_lions}, \cite{Lions}, \cite{lions_frans}). The existence and uniqueness of the solution to the MFG system is  proved for the case when the dependence on the density of the players' state distribution is of nonlocal nature and smoothing \cite{Lions}, or for the case when dependence on the density of the players' state distribution is local and Hamiltonian has a superlinear growth in the gradient variable \cite{Cardal_mfg}. The key idea of those works is that the MFG system
is understood as an optimality condition for certain optimization problem. For some cases the existence can be established using fixed point arguments applied directly to MFG system \cite{Bagagiolo}, \cite{lions_frans} (see also \cite{cardal_lions}).

Another approach is based on a random variables point of view. It was implemented to extended mean field games  \cite{Gomes_extended}. In that paper the second (kinetic equation) is replaced with the system of ODEs. This requires some smoothness of the Hamiltonian, moreover the coercivity condition is imposed on the conjugate of the Hamiltonian (see~\cite{Gomes},~\cite{Gomes_extended}).

In this paper we introduce the notion of a minimax solution of the MFG system and prove its existence for the case when the distribution of players' states can contains  atoms. The main assumption is that the Hamiltonian is continuous and Lipschitz continuous with respect to gradient variable. %We assume that the Hamiltonian is continuous and % and  Lipschitz continuous with respect to space, measure and gradient variables.
The  minimax solutions were proposed for Hamilton--Jacobi equations by Subbotin \cite{Subb_book}. The concept of minimax solution goes back to the notions of stability proposed by Krasovskii and Subbotin for zero-sum differential  games \cite{NN_PDG_en}. The function of position is a minimax solution if its epigraph and hypograph are viable under certain differential inclusions. The definition of the minimax solution to Hamilton--Jacobi equation can be rewritten in the infinitesimal form \cite{Subb_book}. Moreover, the minimax solutions are equivalent to the viscosity solutions.

The definition of the minimax solution to the MFG system proposed in this paper means that the Hamilton--Jacobi equation holds in minimax sense, when the kinetic equation is replaced by the following condition: the distribution of  players' states is determined by the measure on the set of trajectories viable in the graph of the solution of Hamilton--Jacobi equation.
We prove that the minimax solution coincides with the classical solution if it exists. Our approach is close to the approach based  random variables formulation \cite{Gomes_extended}. As in that paper the distribution of players is determined by the measure on trajectories. However, the minimax approach works in the nonsmooth case.

The key idea of the paper is to consider the game with infinitely many identical players corresponding to the original MFG system. We prove the existence of Nash equilibrium for such game.  Further, we consider the function equal to the optimal outcome of the sample player placed at the given position. This function and the distribution of  players' states given by Nash equilibrium form the minimax solution of the MFG system. Additionally, we construct the $\varepsilon$-Nash equilibrium in the game of finite number of players. Here we assume that the players use open--loop random strategies.

The paper is organized as follows. We start in Section \ref{sec_MFG} with the statement of the problem. Then, in Section \ref{sec_minimax} we give the notion of minimax solution to the MFG system. In  Section \ref{sec_properties} we derive the necessary conditions for minimax  solutions in the infinitesimal form; in addition, we study the relation between minimax and classical solutions for MFG systems. Section \ref{sec_infinitely_many} includes the existence result for the Nash equilibrium in the auxiliary differential game with infinitely many players. In Section \ref{sec_dynamic_programming} this result is used to prove  the existence of the minimax solution to the original MFG system. Finally, Section \ref{sec_approximate} presents the construction of the approximate Nash equilibrium for the finitely many players differential game.

\section{Setting the Problem}\label{sec_MFG}
\subsection{Mean Field Game System}
We consider the  first-order mean field game system
%\begin{subequations}
%\left\{
\begin{alignat}{1}
     &\frac{\partial V}{\partial t}+H(t,x,\mu[t],\nabla V)=0,\label{HJB}\ \ V(T,\cdot)=\sigma(\cdot) \\
    &{}\frac{d}{dt}\mu[t]=\mu[t]\left\langle\frac{\partial H}{\partial p}(t,x,\mu[t],\nabla V),\nabla\right\rangle, \ \ \mu[0]=m_0(\cdot)\label{meanfield}.
    \end{alignat}
%\right.
%\end{subequations}
Here $t\in [0,T]$, $x\in\mathbb{R}^n$, $V$ is a function from $\ir$ to $\mathbb{R}$, $\mu[\cdot]$ is a measure-valued function of $t$, i.e. for all $t$ $\mu[t]$ is a probabilistic measure on $\mathbb{R}^n$, $\partial H/\partial p$ denotes the derivative of the Hamiltonian $H$ with respect to 4th variable.  We don't assume that $\mu[t]$ is absolute continuous.

Below we consider the case when $V$ isn't smooth and it satisfies equation (\ref{HJB}) in minimax (viscosity) sense. The kinetic equation is replaced with the condition which doesn't require the existence of $\partial H/\partial p$.

The kinetic  equation (\ref{meanfield}) is written in the operator form.  In this form the MFG systems were studied in \cite{Kolokoltsev}, \cite{Kolokoltstov13} for stochastic case under some smoothness conditions on coefficients. Equation (\ref{meanfield}) is the kinetic equation used in \cite{Kolokoltsev}, \cite{Kolokoltstov13} in the case when  the terms corresponding to jumps and Brownian motion are equal to 0.

Note that if $\mu[t]$ is a absolute continuous and $q(t,\cdot)$ is its density, then the  kinetic equation (\ref{meanfield}) takes the form
\begin{equation}\label{lions_meanfield}
\frac{\partial q}{\partial t}+{\rm div}\left(q \frac{\partial H(t,x,\mu,\nabla V)}{\partial p}\right)=0,\ \ q(0,\cdot)=q_0(\cdot).
\end{equation}
  Here $q_0$ is a density of the measure $m_0$. %The equation (\ref{lions_meanfield}) were considered in \cite{Lions}, \cite{Cardal_mfg}.

\subsection{Notions and Assumptions}\label{sec_notions}
We consider an element of  $\mathbb{R}^{n+1}$ as a pair $(x,z)$, $x\in \mathbb{R}^n$, $z\in\mathbb{R}$. Put $$\mathrm{proj}_1(x,z)\triangleq x.$$

If $S$ is a Banach space, then $C(S)$ is the space of   continuous functions from $S$ to $\mathbb{R}$, $C_b(S)$ is the space of bounded function $S\rightarrow\mathbb{R}$. The spaces $C(S)$ and $C_b(S)$ are equipped with the uniform norm $\|\varphi\|=\sup_{s\in S}|\varphi(s)|$. Further, $C^k(S)$ is the space of $k$-times continuously differentiable functions $S\rightarrow\mathbb{R}$, $C_b^k(S)$ is the space of $k$-times continuously differentiable functions $S\rightarrow\mathbb{R}$ such that any its partial derivative is bounded.

Denote by $C([0,T],\mathbb{R}^{n+1})$ the space of all continuous functions $(x(\cdot),z(\cdot)):[0,T]\rightarrow \mathbb{R}^{n+1}$. The norm on $C([0,T],\mathbb{R}^{n+1})$ is given by $$\|(x(\cdot),z(\cdot))\|=\sup_{t\in [0,T]}(\|x(t)\|+|z(t)|). $$  If $(x(\cdot),z(\cdot))\in C([0,T],\mathbb{R}^{n+1})$, then put $$e_t(x(\cdot),z(\cdot))=x(t). $$

We denote the Borel $\sigma$-algebra on $S$ by $\mathcal{B}(S)$. If $m$ is a Borel measure on $S$, then  ${\rm supp}(m)$ denotes the support of $m$.
If $R$ is a Banach space and $h$ is a Borel map from $S$ to $R$, then $h\#m$ denotes an image measure of $m$ by $h$:
$$\int_R\varphi(r)(h\#m)(dr)=\int_S\varphi(h(s))m(ds) \ \ \forall\phi\in C_b(R). $$

The set of all Borel probability measures on $S$ is denoted by $\mathcal{P}(S)$.
We endow the set $\mathcal{P}(\mathbb{R}^n)$ with the weak topology. This topology is generated by the Kantorovich--Rubinstein distance \cite{Villani}:
\begin{multline*}W(m',m'')=\sup\left\{\int_{\mathbb{R}^n}\phi d(m'-m''):\phi\in {\rm Lip}_1\right\}\\=\inf_{\pi\in\Pi(m',m'')}\int_{\mathbb{R}^n\times\mathbb{R}^n}\|x'-x''\|\pi(dx',dx''). \end{multline*} Here $${\rm Lip}_1=\{\phi:|\phi(x')-\phi(x'')|\leq \|x'-x''\|\},$$
$\Pi(m',m'')$ denotes the set of probabilistic measures $\pi$ on $\mathbb{R}^n\times\mathbb{R}^n$ such that $\pi( Y\times\mathbb{R}^n)=m'( Y)$, $\pi(\mathbb{R}^n\times  Y)=m''( Y)$ for all measurable $A\subset \mathbb{R}^n$.

Let $\mathcal{M}$ be the set of all continuous functions $\mu:[0,T]\rightarrow \mathcal{P}(\mathbb{R}^n)$. If $\mu\in\mathcal{M}$, then $\mu[t]$ is a distribution of players at time $t$. If $\mu,\nu\in\mathcal{M}$, then define   $\mathcal{W}(\mu,\nu)$ by the rule $$\mathcal{W}(\mu,\nu)\triangleq\sup_{t\in [0,T]}W(\mu[t],\nu[t]). $$ Note that $\mathcal{W}$ is a distance, and $\mathcal{M}$ is a Banach space. The measure-valued function $\mu\in\mathcal{M}$ can be considered as an external field for the Hamilton--Jacobi equation (\ref{HJB}).

We assume that
$$H(t,x,m,p)=\max_{u\in P}[\langle p,f(t,x,m,u)\rangle-g(t,x,m,u)]. $$ Here $m$ is a probabilistic measure on $\mathbb{R}^n$, the variable $p$ denotes $\nabla V$.

%The field $\mu$ can be considered as a function $t\mapsto\mu[t]\in\mathcal{P}(\mathbb{R}^n)$.

We assume that the following conditions hold true.
\renewcommand{\theenumi}{\arabic{enumi}}
\begin{enumerate}
  \item $P$ is compact;
  \item $f$ and $g$ are continuous;
  \item $f$ is Lipschitz continuous with respect $x$ and $m$, i.e. there exist constants $L_{f,x}$ and $L_{f,m}$ such that
  $$\|f(t,x',m,u)-f(t,x'',m,u)\|\leq L_{f,x}\|x'-x''\|,$$  $$\|f(t,x,m',u)-f(t,x,m'',u)\|\leq L_{f,m} W(m',m'');$$
  \item $\sigma$ is continuous;
  \item the support of the measure $m_0$ is a compact set $G_0\subset\mathbb{R}^n$.
\end{enumerate}

There exists a compact set $G\subset\mathbb{R}^n$ such that for each external field $\mu$, control $u$ and the function  $x(\cdot)$ satisfying the condition $$ \dot{x}=f(t,x,\mu[t],u(t)), \ \ x(0)\in G_0 $$  the inclusion $x(t)\in G $ holds true for $t\in [0,T]$.  Put
 \begin{equation}\label{K_def}
 K\triangleq\sup\{\|f(t,x,m,u)\|:t\in [0,T],x\in G,m\in\mathcal{P}(E),u\in P\}.
\end{equation}
Since $G$, $P$ and $\mathcal{P}(G)$ are compact, $K$ is finite.

\section{Minimax Solution}\label{sec_minimax}

The solution  of   equation (\ref{HJB}) can be  nonsmooth even if $H$ is smooth \cite{Subb_book}. In this case  equation (\ref{HJB}) should be satisfied in the minimax (viscosity) sense. There exist several (equivalent) definitions of minimax solution \cite{Subb_book}.
We will use the definition involving the notion of viability.

Let multivalued maps $(t,x,m)\mapsto E^-(t,x,m)\subset \mathbb{R}^{n+1}$ and $(t,x,m,a)\mapsto E^+(t,x,m,a)\subset \mathbb{R}^{n+1}$, $a\in \mathcal{A}$ satisfy the following conditions
\begin{list}{\rm (E\arabic{tmp})}{\usecounter{tmp}}
  \item the sets $E^-(t,x,m)$ and $E^+(t,x,m,a)$ are nonempty, closed and convex;
  \item the multivalued function $(t,x,m)\mapsto E^-(t,x,m)$ is upper semicontinuous; for all $a$ the mapping $(t,x,m)\mapsto E^+(t,x,m,a)$ is upper semicontinuous.
  \item $H(t,x,m,p)=\max\{\langle p,\xi\rangle+\zeta:(\xi,\zeta)\in E^-(t,x,m)\};$
  \item for any $t,x,m$ and $p$ there exists $a_*\in \mathcal{A}$ such that
  \begin{multline*}H(t,x,m,p)=\min\{\langle p,\xi\rangle+\zeta:(\xi,\zeta)\in E^+(t,x,m,a_*)\}\\ \geq \min\{\langle p,\xi\rangle+\zeta:(\xi,\zeta)\in E^+(t,x,m,a)\}, \ \ \forall a\in \mathcal{A}.
                     \end{multline*}
\end{list}

One can define  maps $E^-$ and $E^+$ by the rule  $$E^-(t,x,m)={\rm co}\{(f(t,x,m,u),g(t,x,m,u)):u\in P\},$$ $$ E^+(t,x,m,u)=\{(f(t,x,m,u),g(t,x,m,u))\}.$$ In this case the index set $\mathcal{A}$ is equal to $P$.

The function $V$ is a minimax solution for the given external field $\mu\in\mathcal{M}$ (see \cite[Definitions M3, U2, and L2]{Subb_book}) iff
\renewcommand{\theenumi}{\roman{enumi}}
\begin{enumerate}
\item $V(T,\cdot)=\sigma(\cdot,\mu[T])$;
\item
for any $a\in \mathcal{A}$ the epigraph of $V$ is viable under the differential inclusion
$$%\begin{equation}\label{upper_in}
    (\dot{x},\dot{z})\in E^+(t,x(t),\mu[t],a);
$$%\end{equation}
\item the hypograph of $V$ is viable under differential inclusion
\begin{equation}\label{lower_in}
(\dot{x},\dot{z})\in E^{-}(t,x,\mu[t]).
\end{equation}
\end{enumerate}

Note that the definition of a minimax solution doesn't depend on the choice of the maps $E^-$ and $E^+$ \cite{Subb_book}.

Both conditions can be rewritten in the infinitesimal form \cite[Theorem 6.4]{Subb_book}. First, let us introduce the upper Hadamard derivative $d^+$ and lower Hadamard derivative~$d^-$.
$$d^-V(t,x,\alpha,\xi)=\liminf_{\delta\downarrow 0, \alpha'\rightarrow\alpha, \xi'\rightarrow \xi}\frac{V(t+\delta\alpha',x+\delta \xi')-V(t,x)}{\delta}, $$
$$d^+V(t,x,\alpha,\xi)=\limsup_{\delta\downarrow 0, \alpha'\rightarrow\alpha, \xi'\rightarrow \xi}\frac{V(t+\delta\alpha',x+\delta \xi')-V(t,x)}{\delta}. $$

The equivalent definition of the minimax solution is the following. The function $V$ is a minimax solution of  equation (\ref{HJB}) iff
\begin{enumerate}
\item $V(T,\cdot)=\sigma(\cdot,\mu[T])$;
\item $
    \inf\{d^+V(t,x,1,\xi)-\zeta:(\xi,\zeta)\in E^-(t,x,\mu[t])\}\geq 0;
$
\item for each $a\in \mathcal{A}$
$\sup\{d^-V(t,x,1,\xi)-\zeta:(\xi,\zeta)\in E^+(t,x,\mu[t],a)\}\leq 0 .$
\end{enumerate}
Note that $V$ is a minimax solution of equation (\ref{HJB}) if and only if for any $(t_0,x_0)\in\ir$ $V(t_0,x_0)$ is a value  of the control problem
$$\mbox{maximize }J(x(\cdot),u(\cdot),\mu)=\sigma(x(T),\mu[T])-\int_{t^0}^Tg(t,x(t),\mu[t],u(t))dt $$
subject to
$$\dot{x}(t)=f(t,x(t),\mu[t],u(t)), \ \ x(t_0)=x_0. $$

For a function of position $V$ and an external field $\mu$ put
$$\mathcal{S}[V,\mu]\triangleq\{(x(\cdot),z(\cdot)):(\dot{x}(t),\dot{z}(t))\in E^-(t,x(t),\mu[t]), \ \ z(t)=V(t,x(t))\}. $$ The set $\mathcal{S}[V,\mu]$ is a set of solutions of the inclusion $(\dot{x},\dot{z})\in E^-(t,x,\mu[t])$ viable  in ${\rm gr}V$. %Note that $\mathcal{S}[V,\mu]\subset C([0,T],\mathbb{R}^{n+1})$.

\begin{De}\label{def_solution}
We say that $({V},{\mu})\in C(\ir)\times\mathcal{M}$ is a minimax solution of  system (\ref{HJB}), (\ref{meanfield}) iff
\renewcommand{\theenumi}{\arabic{enumi}}
\begin{enumerate}
  \item ${V}$ is a minimax solution of  equation (\ref{HJB});
  \item ${\mu}[0]=m_0 $
  \item there exists a measure $\chi$ on $\mathcal{S}[V,\mu]$ such that $\mu[t]=e_t\#\chi$, i.e. for any $\phi\in C_1[0,T]$ the following equality holds true: $$\int_{\mathbb{R}^n}\phi(x)\mu[t](dx)=\int_{\mathcal{S}[V,\mu]}\phi(x(t))\chi(d(x(\cdot),z(\cdot))).$$
 \end{enumerate}
\end{De}

Note that the definition doesn't depend on the choice of the maps $E^-$ and $E^+$. %Indeed, let $E^-(t,x,m)$  satisfy conditions (E1)--(E3).
Indeed, denote $$H^*(t,x,m,\xi)=\sup_{p\in\mathbb{R}^n}[\langle\xi,p\rangle-H(t,x,m,p)]. $$ We have that
\begin{multline*}\{\xi\in\mathbb{R}^n:\mbox{ there exists }\zeta\mbox{ such that }(\xi,\zeta)\in E^-(t,x,m)\}=\mathcal{H}(t,x,m)\\=\left\{\xi\in\mathbb{R}^n:H^*(t,x,m,\xi)<\infty\right\}. \end{multline*} Moreover, if $(\xi,\zeta)\in E^-(t,x,m)$, then $\zeta\geq H^*(t,x,m,\xi)$, and the pair $(\xi,H^*(t,x,m,\xi))$ is an element of $E^-(t,x,m)$.
Therefore, the set $\mathcal{S}[V,\mu]$ consists of the solutions for the inclusion
$$(\dot{x}(t),\dot{z}(t))\in\{(\xi,\zeta):\xi\in \mathcal{F}(t,x,\mu[t]), \ \  \zeta=H^*(t,x,m,\xi)\}. $$It is determined only by the Hamiltonian $H$.

In Section \ref{sec_dynamic_programming} the existence theorem for the minimax solution of system (\ref{HJB}),~(\ref{meanfield}) is proved.

\section{Properties of Minimax Solution}\label{sec_properties}
For brevity, denote by $\mathcal{T}V(t,x)$  the tangent cone at $(t,x)$ to ${\rm gr}V$:
$$\mathcal{T} V(t,x)=\left\{(v,s):\liminf_{\delta\downarrow 0}\left|\frac{V(t+\delta,x+\delta v)}{\delta}-s\right|=0\right\}. $$

\begin{Pl}\label{pl_infinitisimal}
  If $(V,\mu)$ is a minimax solution to system (\ref{HJB}), (\ref{meanfield}), then there exists a measurable function $b(t,x)$ such that $b(t,x)\in  \mathrm{proj}_1(\overline{\mathrm{co}}\mathcal{T} V(t,x,{\mu})\cap E^-(t,x,\mu[t]))$ and
      \begin{equation}\label{integral_transform}
      \frac{d}{dt}\int_{\mathbb{R}^n}\phi(x){\mu}[t](dx)=\int_{\mathbb{R}^n}\langle b(t,x),\nabla\phi(x)\rangle{\mu}[t](dx)\ \ \forall \phi\in C^1_0(\mathbb{R}^n).
      \end{equation}
\end{Pl}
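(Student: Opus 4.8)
The plan is to read off the velocity field $b$ from the measure $\chi$ on trajectories provided by Definition~\ref{def_solution}, through a superposition/Radon--Nikodym argument, and then to localize $b(t,x)$ in the prescribed set by Lebesgue differentiation together with the upper semicontinuity and convexity of $E^-$. First I would note that the trajectories carrying $\chi$ are uniformly Lipschitz: since $e_0\#\chi=\mu[0]=m_0$ is supported on $G_0$, for $\chi$-a.e.\ $(x(\cdot),z(\cdot))\in\mathcal S[V,\mu]$ we have $x(0)\in G_0$, so by the invariance of $G$ the whole path stays in $G$; as the first component of $E^-(t,x,\mu[t])=\mathrm{co}\{(f(t,x,\mu[t],u),g(t,x,\mu[t],u)):u\in P\}$ is a convex combination of values of $f$ with $\|f\|\le K$ on $G$ (see~(\ref{K_def})), one gets $\|\dot x(t)\|\le K$ for a.e.\ $t$. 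Fix $\phi\in C^1_0(\mathbb R^n)$. Writing $\phi(x(t+h))-\phi(x(t))=\int_t^{t+h}\langle\nabla\phi(x(s)),\dot x(s)\rangle\,ds$ and bounding the integrand by $\|\nabla\phi\|K$, dominated convergence in the variable $(x(\cdot),z(\cdot))$ shows that $t\mapsto\int_{\mathbb R^n}\phi\,d\mu[t]=\int_{\mathcal S[V,\mu]}\phi(x(t))\,d\chi$ is Lipschitz and that for a.e.\ $t$
\[
\frac{d}{dt}\int_{\mathbb R^n}\phi(x)\,\mu[t](dx)=\int_{\mathcal S[V,\mu]}\langle\nabla\phi(x(t)),\dot x(t)\rangle\,\chi(d(x(\cdot),z(\cdot))).
\]

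Next I would realize the right-hand side as an integral against $\mu[t]$. On $\ir$ introduce the occupation measure $\Theta$ with $\int\psi\,d\Theta=\int_0^T\int_{\mathcal S[V,\mu]}\psi(t,x(t))\,d\chi\,dt$, whose disintegration over the time marginal (Lebesgue measure on $[0,T]$) is exactly $\mu[t]$, and the $\mathbb R^n$-valued measure $\Xi$ with $\int\psi\,d\Xi=\int_0^T\int_{\mathcal S[V,\mu]}\psi(t,x(t))\,\dot x(t)\,d\chi\,dt$. From $\|\dot x(t)\|\le K$ we get $|\Xi|\le K\,\Theta$, hence $\Xi\ll\Theta$, and the Radon--Nikodym theorem yields a jointly Borel field $b:\ir\to\mathbb R^n$ with $\|b\|\le K$ and $d\Xi=b\,d\Theta$. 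Disintegrating $\Theta$ and $\Xi$ in $t$ and comparing with the displayed identity gives, for a.e.\ $t$ and all $\phi\in C^1_0(\mathbb R^n)$, precisely~(\ref{integral_transform}).

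It remains to place $b(t,x)$ in $\mathrm{proj}_1\big(\overline{\mathrm{co}}(\mathcal T V(t,x)\cap E^-(t,x,\mu[t]))\big)$, a subset of the set in the statement (as $E^-$ is closed convex by (E1)). For $\chi$-a.e.\ trajectory and a.e.\ $t$, membership in $\mathcal S[V,\mu]$ gives $(\dot x(t),\dot z(t))\in E^-(t,x(t),\mu[t])$; and since $z(t)=V(t,x(t))$, the curve lies in ${\rm gr}\,V$, so at every point of differentiability its velocity is a tangent direction, i.e.\ $(\dot x(t),\dot z(t))\in\mathcal T V(t,x(t))$. Hence these velocities lie in the compact convex sets $C(s,y):=\overline{\mathrm{co}}(\mathcal T V(s,y)\cap E^-(s,y,\mu[s]))$, compactness following from the $K$-bound on first components and the boundedness of $g$ on $G$. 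For $\Theta$-a.e.\ $(t,x)$ the density is the Lebesgue limit $b(t,x)=\lim_{r\downarrow0}\Xi(Q_r)/\Theta(Q_r)$ over shrinking space--time cubes $Q_r\ni(t,x)$; each quotient is a barycenter of the first components of velocities based in $Q_r$, so it lies in $\mathrm{proj}_1\big(\overline{\mathrm{co}}\bigcup_{(s,y)\in Q_r}C(s,y)\big)$. Letting $r\downarrow0$ and using the upper semicontinuity~(E2) and convexity~(E1) of $E^-$ (and the outer regularity of $\mathcal T V$) collapses this to $\mathrm{proj}_1(C(t,x))$, which is what we need, since a barycenter of a probability measure concentrated on a compact convex subset of $\mathbb R^n$ lies in that subset.

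The main obstacle is this final localization. One must secure the joint measurability of $(t,(x(\cdot),z(\cdot)))\mapsto\dot x(t)$ (so that $\Theta$, $\Xi$ and their disintegrations are well defined), confirm the barycenter property just quoted, and, crucially, control the limit of the local averages: the upper semicontinuity of $E^-$ must force the constraint sets attached to nearby base points to collapse onto $\mathcal T V(t,x)\cap E^-(t,x,\mu[t])$ as $r\downarrow0$. By contrast, the differentiation under the integral sign and the Radon--Nikodym construction are routine once the uniform bound $K$ on the path velocities is in hand.
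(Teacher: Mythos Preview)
Your derivation of the continuity equation~(\ref{integral_transform}) is sound, and building $b$ as the Radon--Nikodym density of a vector occupation measure against $\Theta$ is a legitimate alternative to the paper's route. The paper is more direct: it disintegrates $\chi$ itself (not an occupation measure) via \cite[Theorem~10.4.6]{Bogachev} with respect to the evaluation $(x(\cdot),z(\cdot))\mapsto x(t_*)$, obtaining conditional measures $\chi_{t_*,x_*}$ supported on trajectories that pass through $x_*$ at time $t_*$, and sets
\[
b(t_*,x_*)=\int_{\mathcal S[V,t_*,x_*,\mu]}\dot x(t_*)\,\chi_{t_*,x_*}(d(x(\cdot),z(\cdot))).
\]
This agrees with your Radon--Nikodym density, but the representation as a barycenter of velocities of trajectories that \emph{actually pass through} $(t_*,x_*)$ is what makes the localization immediate.

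The genuine gap in your proposal is the final step, exactly where you flag it. Your Lebesgue-differentiation argument needs the set-valued map $(s,y)\mapsto \mathcal T V(s,y)$ to be upper semicontinuous (your parenthetical ``outer regularity of $\mathcal T V$''), so that $\bigcup_{(s,y)\in Q_r}C(s,y)$ collapses onto $C(t,x)$ as $r\downarrow0$. Tangent cones to graphs of merely continuous functions are \emph{not} upper semicontinuous in general, and nothing in the hypotheses supplies this. Condition~(E2) on $E^-$ lets your limit land in $\mathrm{proj}_1 E^-(t,x,\mu[t])$, but it cannot deliver the tangent-cone constraint.

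The paper's disintegration sidesteps this entirely: since $\chi_{t_*,x_*}$ is concentrated on trajectories with $x(t_*)=x_*$ and $z(t_*)=V(t_*,x_*)$, their velocities $(\dot x(t_*),\dot z(t_*))$ lie in $\mathcal T V(t_*,x_*)\cap E^-(t_*,x_*,\mu[t_*])$ already (for a.e.\ $t_*$, by a Fubini argument on the $\lambda\times\chi$-null set where differentiability or viability fails). The barycenter $b(t_*,x_*)$ therefore falls in the closed convex hull of that intersection with no limiting in the base point required. If you want to keep your occupation-measure framework, the fix is to disintegrate $\chi$ rather than $\Xi$: recover $b(t,x)$ as a conditional expectation of $\dot x(t)$ given $x(t)=x$, not as a Lebesgue limit over neighbouring base points.
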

\begin{proof}
We may assume that for any $(x(\cdot),z(\cdot))\in \mathcal{S}[V,\mu]$ and  $t\in [0,T]$ $(\dot{x}(t),\dot{z}(t))\in \mathcal{T} V(t,x(t))$. Indeed,
since the motion from $\mathcal{S}[V,\mu]$ is viable in $\mathrm{gr} V$ we have that $(\dot{x}(t),\dot{z}(t))\in \mathcal{T} V(t,x) $ for a.e. $t\in [0,T]$.

If $\phi\in C_0^1(\mathbb{R}^n)$, $(x(\cdot),z(\cdot))\in \mathcal{S}[V,\mu]$, then
$$\frac{d}{d t}\phi(x(t))=\langle \nabla\phi(x(t) ),\dot{x}(t)\rangle\ \ \mbox{ a.e. }t\in [0,T]. $$ Therefore, for any  $\psi\in C^1([0,T])$ such that $\psi(0)=\psi(T)=0$ we have that
$$\int_0^T\psi(t)\frac{d}{d t}\phi(x(t))dt=\int_0^T\psi(t)\langle\nabla \phi(x(t)),\dot{x}(t)\rangle dt. $$ Further,
\begin{multline*}-\int_{\mathcal{S}[V,\mu]}\int_0^T\psi'(t)\phi(x(t))dt\chi(d(x(\cdot),z(\cdot)))\\=\int_{\mathcal{S}[V,\mu]}\int_0^T\psi(t)\langle\nabla \phi(x(t)),\dot{x}(t)\rangle dt\chi(d(x(\cdot),z(\cdot))). \end{multline*}
Using Fubini's Theorem we get
\begin{multline}\label{chi_fubb}
-\int_0^T\int_{\mathcal{S}[V,\mu]}\psi'(t)\phi(x(t))\chi(d(x(\cdot),z(\cdot)))dt\\= \int_{\mathcal{S}[V,\mu]}\int_0^T\psi(t)\langle\nabla \phi(x(t)),\dot{x}(t)\rangle dt\chi(d(x(\cdot),z(\cdot))).%\int_0^T\int_{{\rm Sol}(0)}\psi(t)\langle\nabla \phi(x(t)),\dot{x}(t)\rangle \chi(d(x(\cdot)))dt.
\end{multline}

Let $$\mathcal{S}[V,t_*,x_*,\mu]=\{(x(\cdot),z(\cdot))\in\mathcal{S}[V,\mu]:x(t_*)=x_*\}.$$ It follows from \cite[Theorem 10.4.6]{Bogachev} that there exists a system of measures $\chi_{t_*,x_*}$ on $\mathcal{S}[V,t_*,x_*,\mu]$ such that for all $\varphi\in C_b(\mathcal{S}[V,\mu])$
\begin{multline*}\int_{\mathcal{S}[V,\mu]}\varphi(x(\cdot),z(\cdot))\chi(d(x(\cdot),z(\cdot)))\\= \int_{0}^Tdt_*\int_{\mathbb{R}^n}\mu[t_*](dx_*)\int_{\mathcal{S}[V,t_*,x_*,\mu]}\varphi(x(\cdot),z(\cdot))\chi_{t_*,x_*}(d(x(\cdot),z(\cdot))). \end{multline*}

Applying this formula to (\ref{chi_fubb})  we get the equality
\begin{multline*}-\int_0^T\psi'(t)\int_{\mathbb{R}^n}\phi(x){\mu}[t](dx)dt\\ =\int_0^Tdt_*\psi(t_*)\int_{\mathbb{R}^n}\mu[t_*](dx_*) \int_{\mathcal{S}[V,t_*,x_*,\mu]}\langle\nabla \phi(x(t_*)),\dot{x}(t_*)\rangle \chi_{t_*,x_*}(d(x(\cdot),z(\cdot))). \end{multline*}

Denote $$b(t_*,x_*)\triangleq\int_{\mathcal{S}[V,t_*,x_*,\mu]}\dot{x}(t_*) \chi_{t_*,x_*}(d(x(\cdot),z(\cdot))).$$ We have that $b(t_*,x_*)\in  \mathrm{proj}_1(\overline{\mathrm{co}}\mathcal{T} V(t_*,x_*,{\mu})\cap E^-(t_*,x_*,\mu[t]))$, and
$$-\int_0^T\psi'(t)\int_{\mathbb{R}^n}\phi(x){\mu}[t_*](dx)dt_=\int_0^T\psi(t)\int_{\mathbb{R}^n} \langle\nabla \phi(x),b(t,x)\rangle {\mu}[t](dx)dt. $$
\end{proof}

We say that $V$ is a classical solution of (\ref{HJB}) for a given external field $\mu$ if $V$ is differentiable and satisfies equation (\ref{HJB}) at any position $(t,x)\in \ir$. If there exists $\partial H/\partial p$ and $V$ is differentiable we say that $\mu$ is a weak solution of (\ref{meanfield}) if for all  $\phi\in C^1_b(\mathbb{R}^n)$ and $\psi\in C^1_b([0,T])$ such that $\psi(0)=\psi(T)=0$ the following equality is valid:
\begin{multline*}-\int_{0}^T\int_{\mathbb{R}^n}\psi'(t)\phi(x)\mu[t](dx)dt\\=\int_{0}^T\int_{\mathbb{R}^n}\psi(t)\left\langle \frac{\partial H}{\partial p}(t,x,\mu[t],\nabla V(t,x)),\nabla\phi(x)\right\rangle\mu[t](dx)dt. \end{multline*}

\begin{Pl}
Assume that $({V},{\mu})$ is a minimax solution to  system (\ref{HJB}),(\ref{meanfield}), there exists $\partial H/\partial p$ and $V$ is differentiable. Then
${V}$ is a classical solution of equation (\ref{HJB}) and ${\mu}$ satisfies equation (\ref{meanfield}) in the weak sense.
\end{Pl}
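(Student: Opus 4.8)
The plan is to treat the two assertions separately: first that the differentiability of $V$ upgrades the minimax (viscosity) identity for the Hamilton--Jacobi equation to a pointwise classical one, and then that Proposition \ref{pl_infinitisimal}, together with this classical identity, forces the velocity field $b$ produced there to coincide with $\partial H/\partial p$, which is exactly the weak form of the kinetic equation. So the argument naturally splits as: (a) classical solution of (\ref{HJB}); (b) weak solution of (\ref{meanfield}), with (b) resting on (a).

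For assertion (a) it is cleanest to use the control-theoretic characterization recalled just before Definition \ref{def_solution}: for every $(t_0,x_0)$ the number $V(t_0,x_0)$ is the value of the problem of maximizing $J(x(\cdot),u(\cdot),\mu)=\sigma(x(T),\mu[T])-\int_{t_0}^{T}g(t,x,\mu[t],u)\,dt$ subject to $\dot x=f(t,x,\mu[t],u)$, $x(t_0)=x_0$. The Hamiltonian of this problem is precisely $H(t,x,\mu[t],p)=\max_{u\in P}[\langle p,f\rangle-g]$, so Bellman's dynamic programming principle together with the assumed differentiability of $V$ gives, by the standard verification argument, $\partial V/\partial t + H(t,x,\mu[t],\nabla V)=0$ at every $(t,x)$ and $V(T,\cdot)=\sigma(\cdot,\mu[T])$; that is, $V$ is a classical solution of (\ref{HJB}). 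The same conclusion follows from the infinitesimal conditions of Section \ref{sec_minimax}: differentiability collapses $d^{\pm}V(t,x,1,\xi)$ to $\partial V/\partial t+\langle\nabla V,\xi\rangle$, so evaluating condition (ii) at a maximizer of $H$ over $E^{-}$ and condition (iii) over the singleton maps $E^{+}$ produces the two opposite inequalities $\partial V/\partial t+H\le 0$ and $\partial V/\partial t+H\ge 0$, hence (\ref{HJB}).

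For assertion (b) I would invoke Proposition \ref{pl_infinitisimal}, which already provides a measurable $b(t,x)\in\mathrm{proj}_1(\overline{\mathrm{co}}\,\mathcal{T}V(t,x)\cap E^{-}(t,x,\mu[t]))$ satisfying (\ref{integral_transform}); the whole task reduces to identifying this $b$. Because $V$ is differentiable, the tangent cone to $\mathrm{gr}\,V$ is the affine subspace $\mathcal{T}V(t,x)=\{(v,\partial V/\partial t+\langle\nabla V,v\rangle):v\in\mathbb{R}^{n}\}$, which is already closed and convex, so $\overline{\mathrm{co}}\,\mathcal{T}V=\mathcal{T}V$. A pair $(\xi,\zeta)\in E^{-}$ lies in $\mathcal{T}V$ iff $\zeta=\partial V/\partial t+\langle\nabla V,\xi\rangle$; combined with the classical identity $\partial V/\partial t=-H(t,x,\mu[t],\nabla V)$ from part (a) and with the representation of $H$ as the maximum of $(\xi,\zeta)\mapsto\langle\nabla V,\xi\rangle-\zeta$ over $E^{-}$, this says exactly that $(\xi,\zeta)$ attains that maximum, i.e. $\mathcal{T}V\cap E^{-}$ is the face of $E^{-}$ exposed by $\nabla V$. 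Since $H(t,x,m,\cdot)$ is assumed differentiable in $p$, the subdifferential of this support-type function is a singleton, so the projection of that face onto the $\xi$-coordinate is the single point $\partial H/\partial p(t,x,\mu[t],\nabla V)$ (Danskin's theorem, i.e. the envelope property of the maximized Hamiltonian). Hence $b(t,x)=\partial H/\partial p(t,x,\mu[t],\nabla V(t,x))$ for $\mu[t]$-a.e. $x$ and a.e. $t$. Substituting into (\ref{integral_transform}), multiplying by $\psi$, integrating in $t$ over $[0,T]$ and integrating the left-hand side by parts using $\psi(0)=\psi(T)=0$ yields precisely the weak formulation of (\ref{meanfield}); the passage from test functions in $C^{1}_{0}$ to $C^{1}_{b}$ is harmless because every $\mu[t]$ is supported in the compact set $G$.

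The main obstacle is the identification $\mathrm{proj}_1(\mathcal{T}V\cap E^{-})=\{\partial H/\partial p\}$: one must verify that the exposed face of the convex set $E^{-}$ is nonempty (which follows from compactness of $P$ and the fact that the maximum defining $H$ is attained) and, crucially, that its projection to the velocity variable collapses to a single point. This last step is exactly where the hypothesis that $H$ is differentiable in $p$ enters — without it the face could project onto a whole segment of optimal velocities, and then $b$ would be only a measurable selection of the convex subdifferential $\partial_p H$, not the single vector $\partial H/\partial p$. The remaining work — measurability of $b$, the reduction to an a.e.-in-$(t,x)$ statement, and the integration by parts — is routine once Proposition \ref{pl_infinitisimal} is in hand.
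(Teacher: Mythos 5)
Your proposal is correct and takes essentially the same route as the paper: differentiability collapses $d^{\pm}V$ to $\partial V/\partial t+\langle\nabla V,\xi\rangle$ so the two minimax inequalities yield the classical identity, and then the field $b$ from Proposition \ref{pl_infinitisimal} is identified with $\partial H/\partial p(t,x,\mu[t],\nabla V)$ and substituted into (\ref{integral_transform}) to obtain the weak kinetic equation. You are in fact more explicit than the paper at the one delicate point --- that differentiability of $H$ in $p$ (Danskin) is what collapses the exposed face $\mathcal{T}V\cap E^{-}$ to a single optimal velocity --- where the paper simply asserts this intersection equals $\{(f(t,x,\mu[t],u_*),g(t,x,\mu[t],u_*))\}$.
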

\begin{proof}
Note that the first statement of the Proposition is proved in \cite[\S 2.4]{Subb_book}.
If ${V}$ is differentiable, then
$$d^+V(t,x,1,\xi)=d^-V(t,x,1,\xi)=\frac{\partial V}{\partial t}+\langle \nabla V,\xi\rangle. $$ Since $V$ is a minimax solution of (\ref{HJB}), we have that there exists $u_*\in P$ such that $$\frac{\partial V}{\partial t}+\langle \nabla V,f(t,x,\mu[t],u_*)\rangle-g(t,x,\mu[t],u_*))\geq 0,$$ and for all $u\in P$
$$\frac{\partial V}{\partial t}+\langle \nabla V,f(t,x,\mu[t],u)\rangle-g(t,x,\mu[t],u))\leq 0.$$
This means, that $V$ is a classical solution of equation (\ref{HJB}). Moreover, $ \mathcal{T} V(t,x,\mu)=\{f(t,x,\mu[t],u_*),g(t,x,\mu[t],u_*)\}$.
Therefore,  condition (\ref{integral_transform}) takes the form
$$\frac{d}{dt}\int_{\mathbb{R}^n}\phi(x){\mu}[t](dx)=\int_{\mathbb{R}^n}\langle f(t,x,\mu[t],u_*),\nabla\phi(x)\rangle{\mu}[t](dx)\ \ \forall \varphi\in C^1_0(\mathbb{R}^n). $$
 Further, under assumptions of the Proposition
$$\frac{\partial H}{\partial p}(t,x,\mu[t],\nabla V(t,x))=f(t,x,\mu[t],u_*),$$ we obtain that equation (\ref{meanfield}) is valid in the weak sense.
\end{proof}

\begin{Pl} Assume that
\begin{itemize}
\item $H$ is differentiable with respect to $p$;
\item $V$ is a classical solution of equation (\ref{HJB}), and $\nabla V$ is Lipschitz continuous with respect to $x$;
\item  $\mu$ is a weak solution of (\ref{meanfield}).
\end{itemize}
Then $(V,\mu)$ is a minimax solution to  system (\ref{HJB}),~(\ref{meanfield})
\end{Pl}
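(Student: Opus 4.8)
The plan is to verify the three requirements of Definition~\ref{def_solution} in turn, the first two being immediate and the third carrying the real content. For requirement~1, I would invoke the standard fact that a classical solution of a Hamilton--Jacobi equation is automatically a minimax solution; this is the equivalence recorded in \cite[\S 2.4]{Subb_book} and already used in the proof of the preceding proposition. Since $V$ satisfies (\ref{HJB}) pointwise together with its terminal condition, it is in particular a minimax solution of (\ref{HJB}) for the external field $\mu$. Requirement~2, namely $\mu[0]=m_0$, is part of the notion of a weak solution of (\ref{meanfield}), whose initial datum is $m_0$, so nothing further is needed there.

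The substance is requirement~3: producing a measure $\chi$ on $\mathcal{S}[V,\mu]$ with $\mu[t]=e_t\#\chi$. First I would record that the weak formulation of (\ref{meanfield}) says precisely that $\mu[\cdot]$ is a weak solution of the continuity equation $\partial_t\mu+\mathrm{div}(b\,\mu)=0$ with the continuous, uniformly bounded velocity field $b(t,x)=\frac{\partial H}{\partial p}(t,x,\mu[t],\nabla V(t,x))$; boundedness of $b$ on the relevant region follows from compactness of $G$, $P$ and $\mathcal{P}(G)$ exactly as in (\ref{K_def}). I would then apply the superposition principle for the continuity equation (Ambrosio--Gigli--Savar\'e) to obtain a probability measure $\eta$ on $C([0,T],\mathbb{R}^n)$ concentrated on absolutely continuous curves $x(\cdot)$ solving $\dot{x}(t)=b(t,x(t))$ for a.e.\ $t$ and satisfying $\mu[t]=e_t\#\eta$ for every $t$. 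This step uses only integrability of $b$ against $\mu$, so it does not require uniqueness of the flow; when $b$ happens to be Lipschitz in $x$ one could instead push $m_0$ forward along the characteristic flow and identify it with $\mu[t]$ by uniqueness, but the superposition principle covers the general case under the stated hypotheses.

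Next I would lift these curves to $\mathbb{R}^{n+1}$ and show the lift lands in $\mathcal{S}[V,\mu]$. For an $\eta$-generic curve $x(\cdot)$, set $z(t)=V(t,x(t))$; since $V$ is differentiable and $x(\cdot)$ is absolutely continuous, the chain rule gives $\dot{z}=\partial_t V+\langle\nabla V,\dot{x}\rangle$ a.e. As in the preceding proposition, differentiability of $H$ in $p$ yields $\frac{\partial H}{\partial p}(t,x,\mu[t],\nabla V)=f(t,x,\mu[t],u_*)$ and $H(t,x,\mu[t],\nabla V)=\langle\nabla V,f(t,x,\mu[t],u_*)\rangle-g(t,x,\mu[t],u_*)$ at the maximizer $u_*$, whence, using $\partial_t V=-H$, one computes $\dot{z}=g(t,x,\mu[t],u_*)$. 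Thus $(\dot{x},\dot{z})=(f(t,x,\mu[t],u_*),g(t,x,\mu[t],u_*))\in E^-(t,x,\mu[t])$ and $z(t)=V(t,x(t))$, so the lifted curve belongs to $\mathcal{S}[V,\mu]$. Defining the continuous lift map $\ell(x(\cdot))=(x(\cdot),V(\cdot,x(\cdot)))$ and setting $\chi=\ell\#\eta$, I would conclude from $e_t\circ\ell=e_t$ that $e_t\#\chi=e_t\#\eta=\mu[t]$, which is requirement~3.

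The main obstacle I expect is the correct application of the superposition principle and the verification that the representing measure is concentrated on genuine characteristic curves: one must ensure the a.e.-in-$t$ ODE identity $\dot{x}=\partial H/\partial p$ holds for $\eta$-a.e.\ curve and that the chain rule for $t\mapsto V(t,x(t))$ is legitimate under only the stated differentiability of $V$ with $\nabla V$ Lipschitz in $x$. The remaining computations---identifying $\partial H/\partial p$ with $f(\cdot,u_*)$ via the maximizer and checking membership in the convex set $E^-$---are routine and parallel the preceding proposition.
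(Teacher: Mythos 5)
Your proposal is correct, but the key step (requirement~3 of Definition~\ref{def_solution}) is handled by a genuinely different mechanism than in the paper. The paper goes ``forward'': it solves the characteristic ODE $\dot{x}=\frac{\partial H}{\partial p}(t,x,\mu[t],\nabla V(t,x))$ from each $x_0$ (well-posed because $\nabla V$ is Lipschitz in $x$), defines $\chi$ as the pushforward of $m_0$ along the resulting flow, sets $\nu[t]=e_t\#\chi$, observes that $\nu$ is a weak solution of the same continuity equation (\ref{mfg_classic}) as $\mu$, and concludes $\mu=\nu$ from the uniqueness theorem cited from \cite{Kolokoltsev}. You go ``backward'': you take the given $\mu$ and represent it directly by a measure on characteristic curves via the Ambrosio--Gigli--Savar\'e superposition principle, then lift those curves into $\mathrm{gr}\,V$. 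Both routes are sound. The paper's argument is elementary modulo the cited uniqueness result and uses the Lipschitz hypothesis on $\nabla V$ twice (Cauchy--Lipschitz for the flow, and uniqueness for the continuity equation); yours outsources the work to a heavier external theorem not among the paper's references, but in exchange needs only boundedness of the velocity field, so the Lipschitz assumption becomes inessential for the representation step --- it is still worth noting explicitly that it makes $V$ locally Lipschitz, which is what legitimizes the absolute continuity of $t\mapsto V(t,x(t))$ and hence the chain-rule computation you flag as the delicate point. Your identification of $\partial H/\partial p$ with $f(\cdot,u_*)$ and the verification that the lifted curve satisfies the inclusion defining $\mathcal{S}[V,\mu]$ match the paper's computations in the preceding proposition. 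One small remark: the claim that $\mu[0]=m_0$ is ``part of the notion of a weak solution'' is not literally backed by the paper's weak formulation, whose test functions vanish at $t=0$ and $t=T$; the paper glosses over this too, so it is not a defect of your argument relative to the paper's, but it deserves a sentence.
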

\begin{proof} Let $x_*(\cdot,x_0)$ be a solution of the initial value problem
$$\dot{x}=\frac{\partial H}{\partial p}(t,x,\mu[t],\nabla V(t,x)), \ \ x(0)=x_0. $$ Further, denote $$z_*(\cdot,x_0)=V(0,x_0)+\int_0^tH^*(t,x_*(t,x_0),\mu[t],\dot{x}_*(t,x_0))dt.$$
Note that $H^*(t,x_*(t,x_0),\mu[t],\dot{x}_*(t,x_0))=-g(t,x_*(t,x_0),\mu[t],\dot{x}_*(t,x_0))$.
We have that $$\mathcal{S}[V,\mu]=\{(x_*(\cdot,x_0),z_*(\cdot,x_0)):x_0\in\mathbb{R}^n\}. $$ Define the measure $\chi$ by the rule
$$\chi( Y)=m_0\{x_0:(x_*(\cdot,x_0),z_*(\cdot,x_0))\in Y\}. $$
Put $\nu[t]=e_t\#\chi$. We have that $\nu$ is a weak solution of the equation
\begin{equation}\label{mfg_classic}
\frac{d}{dt}\nu[t]=\left\langle \frac{\partial H}{\partial p}(t,x,\mu[t],\nabla V(t,x)),\nabla \right\rangle\nu[t].
\end{equation}
%For $$\tilde{b}(t,x)=\frac{\partial H}{\partial p}(t,x,\mu[t],\nabla V(t,x)). $$

By assumption $\mu$  is also a solution of  equation (\ref{mfg_classic}). From \cite[Theorem 2.3 and Proposition 4.2]{Kolokoltsev} we conclude that $\mu=\nu$.
\end{proof}

\section{Games with Infinitely Many Players}\label{sec_infinitely_many}

In this section we introduce the games with infinitely many players in the case when the dynamics and the outcome of each player depend only on the state of the player, his control and the distribution of players' states. First static games with infinitely many players were considered in \cite{Aumann1}, \cite{Aumann2}, \cite{Vind}; the review of games with infinitely many players can be found in \cite{Nash_many_survey}.  The basic constructions of dynamical games with infinitely many players were first proposed in \cite{petrisyan}.

Let $\Omega$ be a set of players. We assume that $\Omega$ is a compact metric space. Denote the metric on $\Omega$ by $d_\Omega$. Further, let $\eta$ be a nonatomic measure on $\Omega$, $x_0:\Omega\rightarrow G_0$ be a continuous function. %We introduce the set $\Omega$ to avoid the the problems with atoms of~$m_0$.

Let the  state of player $\omega$ at time $t$ $x[t,\omega]$ satisfy the equation
\begin{equation}\label{external_field_sys}
\frac{d}{dt}{x}[t,\omega]=f(t,x[t,\omega],\mu[t],u(t)), \ \ x(0)=x_0(\omega), \ \ u(t)\in P, \ \ t\in [0,T].
\end{equation}
Here $\mu[t]$ is a distribution of players' states at time $t$. It is given by the equality
 $%\begin{equation}\label{mu_eq}
 \mu[t]=x[t,\cdot]\#\eta.
$%\end{equation}

If $u$ is a control of player $\omega$, then his outcome is
\begin{equation}\label{payoff}
J[x[\cdot,\omega],u,\mu]\triangleq \sigma(x[T,\omega],\mu[T])-\int_{0}^Tg(t,x[t,\omega],\mu[t],u(t))dt.
\end{equation}
Each player wants to maximize his own payoff.

Having system (\ref{HJB}), (\ref{meanfield}) we can construct the differential game with  dynamics (\ref{external_field_sys}) and outcome (\ref{payoff}) by setting $\Omega\triangleq G_0\times [0,1]$, $\eta\triangleq m_0\times\lambda$, and $x_0(\omega',\omega'')\triangleq\omega'$ for  $\omega=(\omega',\omega'')$, $\omega'\in G_0$, $\omega''\in [0,1]$. Note that the proposed approach admits the case when the initial measure $m_0$ contains atoms.

Note that the maximum in (\ref{payoff}) may not be achieved. To relax problem (\ref{external_field_sys}), (\ref{payoff}) we introduce the following construction proposed in \cite{Chen_1976}. Let $S$ and $R$ be a compact metric spaces. The set $C(S\times R)$ is a separable metric space. Let $\varphi_1,\varphi_2,\ldots$ be dense in $C(S\times R)$. If  $\varkappa\in\mathcal{P}(S\times R)$, then define the weak norm of $\varkappa$ by the rule
$$\|\varkappa\|_{w}\triangleq\sum_{j=1}^\infty\frac{|\langle\varphi_j,\varkappa\rangle|}{2^j(1+\|\varphi_j\|)}. $$ Here $$\langle\varphi,\varkappa\rangle=\int_{S\times R}\varphi(s,r)\varkappa(d(s,r)). $$ The set $\mathcal{P}(S\times R)$ with the norm $\|\cdot\|_w$ is compact. Moreover $\|\varkappa_i-\varkappa\|\rightarrow 0$, as $i\rightarrow\infty$ if and only if $\varkappa_i$ converges to $\varkappa$ in the weak sense.

Now let  $\theta$ be a nonatomic Borel measure on   $S$. Denote $$\Lambda(S,\theta,R)\triangleq\{\varkappa\in\mathcal{P}(S\times R):\varkappa( Y\times R)=\theta( Y)\}.$$ The set $\Lambda(S,\theta,R)$ is also compact.

Recall \cite[Theorem 10.4.6]{Bogachev} that for each $\varkappa\in \Lambda(S,\theta,R)$ there exists a function $h:S\rightarrow\mathcal{P}(R)$ such that for any $\varphi\in C(S\times R)$ the function
\begin{equation}\label{measurability_h}
s\mapsto \int_R\varphi(s,t)h(s)(dr)\mbox{ is measurable}
\end{equation}
and
\begin{equation}\label{diff_kappa}
\int_S\theta(ds)\int_R\varphi(s,r)h(s)(dr)=\int_{S\times R}\varphi(s,r)\varkappa(d(s,r)).
\end{equation}
Below we use the denotation
$$\frac{\partial\varkappa}{\partial \theta}(s,dr)=h(s)(dr). $$

Conversely, if $h$ satisfies condition (\ref{measurability_h}), then there exists a measure $\varkappa\in \Lambda(S,\theta,R)$ such that condition (\ref{diff_kappa}) holds.

Denote $\mathcal{U}=\Lambda([0,T],\lambda,P)$. Here $\lambda$ is Lebesgue measure on $[0,T]$. Elements of $\mathcal{U}$ are control measures. If $\alpha\in \mathcal{U}$, $\mu\in\mathcal{M}$, $\omega\in \Omega$, then the corresponding motion $x[\cdot,\omega,\alpha,\mu]$ is a solution of the equation
$$x(t)=x_0(\omega)+\itp{t}f(\tau,x(\tau),\mu[\tau],u)\alpha(d(\tau,u)). $$
Moreover, the outcome of player $\omega$ playing with the control measure $\alpha$ is
$$J[\omega,\alpha,\mu]=\sigma(x[T,\omega,\alpha,\mu],\mu[T])- \itp{T}g(\tau,x[\tau,\omega,\alpha,\mu],\mu[\tau],u)\alpha(d(\tau,u)). $$

The approach based on control measures is equivalent to the approach based on measure-valued controls proposed by Warga \cite{Warga}. Indeed, if $h=\frac{\partial\alpha}{\partial \lambda}$ is a measure-valued control, then
$x[\cdot,\omega,\alpha,\mu]$ is a solution of initial value problem
$$\dot{x}=\int_Pf(t,x(t),\mu[t],u)h(t,du), \ \ x(0)=x_0(\omega). $$ Analogously,
$$J[\omega,\alpha,\mu]=\sigma(x[T,\omega,\alpha,\mu],\mu[T])- \int_0^{T}\int_Pg(\tau,x[\tau,\omega,\alpha,\mu],\mu[\tau],u)h(t,du)dt. $$

Below we use the construction analogous to the mixed strategies in the theory of static games with the finite number of players. Denote $\mathcal{D}=\Lambda(\Omega,\eta,\mathcal{U})$. The elements of $\mathcal{D}$ are the profile of open--loop strategies, i.e. if $\gamma\in\mathcal{D}$, then for $\eta$-almost every $\omega\in \Omega$ $\partial \gamma/ \partial \eta(\omega,\cdot) $ is a distribution of controls chosen by player $\omega$. %It is an analog of mixed strategy for the static games with finite number of players.

\begin{De}We say that the pair $X[\gamma]=(y,\mu)$ is a process generated by the profile of strategies $\gamma$ if
$y:[0,T]\times \Omega\times \mathcal{U}\rightarrow \mathbb{R}^n$ and $\mu\in\mathcal{M}$ are such that
$y[\cdot,\omega,\alpha]=x[\cdot,\omega,\alpha,\mu]$   for all $\alpha\in\mathcal{U}$, $\omega\in \Omega$ and $\mu[t]=y[t,\cdot,\cdot]\#\gamma$.
\end{De}

Recall that the equality $\mu[t]=y[t,\cdot,\cdot]\#\gamma$ can be rewritten in the following way:
$$
  \int_{\mathbb{R}^n}\phi(x)\mu[t](dx)=\iou\phi(y[t,\omega,\alpha])\gamma(d(\omega,\alpha)), \ \ \forall\phi\in C_b(\mathbb{R}^n).$$

Put $$\mathcal{M}'\triangleq\{\mu\in\mathcal{M}:\mu[t]\in\mathcal{P}(G), \ \ W(\mu(t',\cdot),\mu(t'',\cdot))\leq K|t'-t''|\}. $$ Here $K$ is defined by (\ref{K_def}). The set $\mathcal{M}'$ is  convex and compact.

\begin{Pl}\label{pl_process} For each profile of strategies $\gamma$ there exists a unique process $X[\gamma]$.
\end{Pl}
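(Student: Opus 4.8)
The plan is to reduce the existence and uniqueness of a process to a single fixed-point problem on $\mathcal{M}'$. Observe that once an external field $\mu$ is fixed, the value $y[\cdot,\omega,\alpha]$ is forced by the definition to equal $x[\cdot,\omega,\alpha,\mu]$, and the latter is the unique solution of the integral equation $x(t)=x_0(\omega)+\itp{t}f(\tau,x(\tau),\mu[\tau],u)\alpha(d(\tau,u))$; existence and uniqueness of this solution is standard (Picard--Gronwall), since $f$ is continuous and Lipschitz in $x$ and the projection of $\alpha$ onto $[0,T]$ is $\lambda$. Hence a process $X[\gamma]=(y,\mu)$ exists, and is unique, if and only if $\mu$ is a fixed point of the map $\Phi:\mathcal{M}'\to\mathcal{M}$ given by $\Phi(\mu)[t]\triangleq x[t,\cdot,\cdot,\mu]\#\gamma$. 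I would therefore concentrate on showing that $\Phi$ has a unique fixed point in $\mathcal{M}'$.

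First I would verify that $\Phi$ is well defined and maps $\mathcal{M}'$ into itself. For the image measure to make sense I need $(\omega,\alpha)\mapsto x[t,\omega,\alpha,\mu]$ to be Borel; this follows from its continuity, which in turn follows from the continuity of $x_0$, the continuity of $f$, and the continuous dependence of the solution of the integral equation on the control measure $\alpha$ in the weak topology on $\mathcal{U}$ (the standard stability result of relaxed-control theory). Then $\Phi(\mu)[t]$ is a Borel probability measure supported in $G$ by the invariance property of Section~\ref{sec_MFG}, noting that the velocity of a relaxed trajectory lies in $\mathrm{co}\{f(t,x,\mu[t],u):u\in P\}$. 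The time-Lipschitz estimate $W(\Phi(\mu)[t'],\Phi(\mu)[t''])\le K|t'-t''|$ is obtained by coupling $\Phi(\mu)[t']$ and $\Phi(\mu)[t'']$ through the common variable $(\omega,\alpha)$ and using $\|f\|\le K$ together with $\alpha([t'',t']\times P)=|t'-t''|$; this simultaneously yields continuity of $t\mapsto\Phi(\mu)[t]$, so $\Phi(\mu)\in\mathcal{M}'$.

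The core of the argument is a contraction estimate. Given $\mu,\nu\in\mathcal{M}'$, I would compare the trajectories sharing the same $(\omega,\alpha)$ but driven by different external fields: the Lipschitz bounds $L_{f,x}$, $L_{f,m}$ and Gronwall's inequality yield the pointwise estimate $\|x[t,\omega,\alpha,\mu]-x[t,\omega,\alpha,\nu]\|\le L_{f,m}\int_0^t e^{L_{f,x}(t-\tau)}W(\mu[\tau],\nu[\tau])\,d\tau$, whose right-hand side is independent of $(\omega,\alpha)$. Since the diagonal map $(\omega,\alpha)\mapsto(x[t,\omega,\alpha,\mu],x[t,\omega,\alpha,\nu])$ pushes $\gamma$ forward to a coupling of $\Phi(\mu)[t]$ and $\Phi(\nu)[t]$, the Kantorovich--Rubinstein distance inherits the same Volterra bound $W(\Phi(\mu)[t],\Phi(\nu)[t])\le c\int_0^t W(\mu[\tau],\nu[\tau])\,d\tau$ with $c=L_{f,m}e^{L_{f,x}T}$. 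Iterating this Volterra inequality gives $\mathcal{W}(\Phi^k(\mu),\Phi^k(\nu))\le \frac{(cT)^k}{k!}\mathcal{W}(\mu,\nu)$, so that $\Phi^k$ is a contraction once $k$ is large enough.

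Since $\mathcal{M}'$ is compact, hence a complete metric space for $\mathcal{W}$, the Banach fixed-point theorem applied to $\Phi^k$ produces a unique fixed point $\mu^*$ of $\Phi^k$, which is then automatically the unique fixed point of $\Phi$. Setting $y[\cdot,\omega,\alpha]=x[\cdot,\omega,\alpha,\mu^*]$ delivers the required process, and its uniqueness follows from the uniqueness of $\mu^*$ together with the fact that $y$ is entirely determined by $\mu^*$. I expect the main obstacle to be the well-definedness step, namely the joint measurability, in fact continuity, of $(\omega,\alpha)\mapsto x[t,\omega,\alpha,\mu]$ with respect to the weak topology on the control measures, which is what makes the pushforward $\Phi(\mu)[t]$ a legitimate Borel measure; the remaining estimates are routine Gronwall-type computations.
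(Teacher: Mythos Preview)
Your proposal is correct and follows essentially the same route as the paper: reduce to a fixed point of the operator $\mu\mapsto x[t,\cdot,\cdot,\mu]\#\gamma$ on $\mathcal{M}'$, check that this operator maps $\mathcal{M}'$ into itself, and derive via Gronwall the Volterra bound $W(\Phi(\mu)[t],\Phi(\nu)[t])\le c\int_0^t W(\mu[\tau],\nu[\tau])\,d\tau$. The only difference is the fixed-point technology: the paper gets existence from Schauder's theorem (compactness and convexity of $\mathcal{M}'$ plus continuity of the operator) and proves uniqueness by a separate local step-by-step argument based on the same Volterra bound, whereas you iterate the Volterra inequality to make $\Phi^k$ a strict contraction and invoke Banach's theorem, obtaining existence and uniqueness in one stroke without needing convexity.
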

\begin{proof}Let us introduce the operator $A:\mathcal{M}'\rightarrow\mathcal{M}'$.
Put $A(\mu)[t]\triangleq x[t,\cdot,\cdot,\mu]\#\gamma$, i.e.
$\nu=A(\mu)$ if
\begin{equation}\label{A_def}
\int_{\mathbb{R}^n}\phi(x)\nu[t](dx)=\iou\phi(x[t,\omega,\alpha,\mu])\gamma(d(\omega,\alpha)),  \ \ \forall \phi\in C_b(\mathbb{R}^n).
\end{equation} %This formula defines the measure $\nu[t]$  in the unique way.

Let us show that $\nu=A(\mu)\in\mathcal{M}'$. Let $\phi(x)=0$ on $G$. Then
$$\int_{\mathbb{R}^n}\phi(x)\nu[t](dx)=0. $$ This means that ${\rm supp}(\nu[t])\subset G$. Further, for $\phi\geq 0$
$$\int_{\mathbb{R}^n}\phi(x)\nu(dx)\geq 0, \ \ \mbox{and} $$
$$\int_{\mathbb{R}^n}1\nu[t](dx)=1. $$ Hence, $\nu[t]\in\mathcal{P}(G).$

Now let $t',t''\in [0,T]$, $t''>t'$. We have that
$$x[t'',\omega,\alpha,\mu]=x[t',\omega,\alpha,\mu]+\ittp{t'}{t''} f(\tau,x[\tau,\omega,\alpha,\mu],\mu[\tau],u)\alpha(d(\tau,u)). $$
Thus,
$$\|x[t'',\omega,\alpha,\mu]-x[t',\omega,\alpha,\mu]\|\leq K|t''-t'|. $$
Further,
\begin{multline*}W(\nu(t'',\cdot),\nu(t',\cdot))= \sup\Bigl\{\int_{\mathbb{R}^n}\phi(x)\nu(t'',dx)-\int_{\mathbb{R}^n}\phi(x)\nu(t',dx):\phi\in{\rm Lip}_1\Bigr\}\\
=\sup\Bigl\{\iou [\phi(x[t'',\omega,\alpha,\mu])-\phi(x[t',\omega,\alpha,\mu])]\gamma(d(\omega,\alpha)):\phi\in{\rm Lip}_1\Bigr\} \\\leq
K|t''-t'|.
\end{multline*}
Hence, the operator $A$ is well-defined.

Now let us show that $A$ is continuous. Let $\mu_1,\mu_2\in \mathcal{M}'$. We have that
\begin{multline*}\|x[t,\omega,\alpha,\mu_1]-x[t,\omega,\alpha,\mu_2]\|\\\leq \int_{0}^t L_{f,x}\|x[\tau,\omega,\alpha,\mu_1]-x[\tau,\omega,\alpha,\mu_2]\|d\tau+ \int_{0}^tL_{f,m}W(\mu_1[\tau],\mu_2[\tau])d\tau. \end{multline*}
Using Gronwall's inequality we get
\begin{equation}\label{x_estima_gronwall}
\|x[t,\omega,\alpha,\mu_1]-x[t,\omega,\alpha,\mu_2]\|\leq e^{L_{f,x}T}L_{f,m}\int_{0}^t W(\mu_1[\tau],\mu_2[\tau])d\tau.
\end{equation}
 Denote $C_1=e^{L_{f,x}T}L_{f,m}$. Since
\begin{multline*}
W(A(\mu_1)[t],A(\mu_2)[t])\\=\sup\Bigl\{ \iou(\phi(x[t,\omega,\alpha,\mu_1])-\phi(x[t,\omega,\alpha,\mu_2]))\gamma(d(\omega,\alpha)):\phi\in{\rm Lip}_1\Bigr\}, \end{multline*}
we have that
\begin{equation}\label{W_A_estim_t}
  W(A(\mu_1)[t],A(\mu_2)[t])\leq C_1 \int_{0}^tW(\mu_1[\tau],\mu_2[\tau])d\tau.
\end{equation}

Hence,
$$\mathcal{W}(A(\mu_1),A(\mu_2))\leq C_1T\mathcal{W}(\mu_1,\mu_2). $$

Since $\mathcal{M}'$ is compact and $A:\mathcal{M}'\rightarrow\mathcal{M}$ is continuous, $A$ admits a fixed point $\mu^*$. The pair $(y^*,\mu^*)$ with $y^*:[0,T]\times \mathcal{U}\times\Omega\rightarrow \mathcal{R}^n$ given by the equality $y^*[\cdot,\omega,\alpha]=x[\cdot,\omega,\alpha,\mu^*]$ is a process generated by $\gamma$.

Now let us show the uniqueness of $X[\gamma]$. Note that if $(y,\mu)$ is a process generated by $\gamma$, then $\mu=A(\mu)$. Let $\mu_1$ and $\mu_2$ be fixed points of $A$. Let $\vartheta$ be a maximal time  such that $\mu_1[t]=\mu_2[t]$ for $t\in[0,\vartheta]$. If $\vartheta<T$ choose a positive number $\delta$ such that $\delta< \min\{T-\vartheta, 1/C_1\}$. It follows from (\ref{W_A_estim_t}) that
$$\sup_{t\in [\theta,\vartheta+\delta]}W(\mu_1[t],\mu_2[t])\leq C_1\delta\sup_{t\in [\vartheta,\vartheta+\delta]}W(\mu_1[t],\mu_2[t]). $$ Therefore, $W(\mu_1[t],\mu_2[t])=0$ for $t\in [\vartheta,\vartheta+\delta]$. This contradicts with the choice of~$\vartheta$.
\end{proof}

\begin{De} We say that $\hat{\gamma}$ is a Nash equilibrium profile if for $(\hat{y},\hat{\mu})=X[\hat{\gamma}]$,  $\eta$-almost all $\omega\in\Omega$ and for all  $\varrho\in\mathcal{P}(\mathcal{U})$ the following inequality if fulfilled
\begin{equation}\label{nash_ineq}
\int_{\mathcal{U}}J[\omega,\alpha,\hat{\mu}]\varrho(d\alpha)\leq \int_{\mathcal{U}}J[\omega,\alpha,\hat{\mu}]\frac{\partial\hat{\gamma}}{\partial \eta}(\omega,d\alpha).
\end{equation}
\end{De}

Denote  $$\mathcal{E}(\omega,\mu)={\rm Argmax}\{J[\omega,\alpha,\mu]:\alpha\in\mathcal{U}\}, \ \ \mathcal{E}(\mu)=\{(\omega,\alpha):\alpha\in\mathcal{E}(\omega,\mu)\}.$$
The set $\mathcal{E}(\mu)$ is closed.

\begin{Pl}The profile $\hat{\gamma}$ is a Nash equilibrium, if and only if  ${\rm supp}(\hat{\gamma})\subset\mathcal{E}(\hat{\mu}) $ for $(\hat{y},\hat{\mu})=X[\hat{\gamma}]$.
\end{Pl}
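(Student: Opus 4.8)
The plan is to reduce the biconditional to a single intermediate characterization, namely that $\hat{\gamma}$ assigns full mass to $\mathcal{E}(\hat{\mu})$, and to reach it by combining two structural features of the problem: the affineness of the payoff functional in the mixed control $\varrho$, and the disintegration of $\hat{\gamma}$ along $\eta$ furnished by the coordinate $\partial\hat{\gamma}/\partial\eta$. Throughout I would use that $\eta$ is a probability measure (as forced by the definition of $\Lambda(\Omega,\eta,\mathcal{U})$) and that $\mathcal{U}$ is compact with $\alpha\mapsto J[\omega,\alpha,\hat{\mu}]$ continuous, so that $\mathcal{E}(\omega,\hat{\mu})$ is nonempty and the maximum is attained; this is the same continuity used to see that $\mathcal{E}(\hat{\mu})$ is closed.

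First I would treat the pointwise step, for fixed $\omega$. The map $\varrho\mapsto\int_{\mathcal{U}}J[\omega,\alpha,\hat{\mu}]\varrho(d\alpha)$ is affine on $\mathcal{P}(\mathcal{U})$, so it attains its maximum over $\mathcal{P}(\mathcal{U})$ precisely at those probability measures carried by $\mathcal{E}(\omega,\hat{\mu})={\rm Argmax}_\alpha J[\omega,\alpha,\hat{\mu}]$. Consequently the Nash inequality (\ref{nash_ineq}) holds for $\eta$-a.e.\ $\omega$ if and only if, for $\eta$-a.e.\ $\omega$, the conditional $\partial\hat{\gamma}/\partial\eta(\omega,\cdot)$ is carried by $\mathcal{E}(\omega,\hat{\mu})$. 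For the forward implication I would test (\ref{nash_ineq}) against $\varrho=\delta_\alpha$, obtaining $J[\omega,\alpha,\hat{\mu}]\le c(\omega)$ for all $\alpha$, where $c(\omega)=\int_{\mathcal{U}}J[\omega,\alpha,\hat{\mu}]\,\partial\hat{\gamma}/\partial\eta(\omega,d\alpha)$; since the integrand defining $c(\omega)$ is then bounded above by $c(\omega)$ while integrating to $c(\omega)$, the nonnegative integrand $c(\omega)-J[\omega,\cdot,\hat{\mu}]$ vanishes $\partial\hat{\gamma}/\partial\eta(\omega,\cdot)$-a.e., forcing concentration on the Argmax set. The converse implication is immediate.

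Next I would pass from this fiberwise statement to one about $\hat{\gamma}$ itself through the disintegration identity (\ref{diff_kappa}), extended from continuous test functions to indicators of Borel sets in the standard manner. Applied to the set $\mathcal{E}(\hat{\mu})$, whose section over $\omega$ is exactly $\mathcal{E}(\omega,\hat{\mu})$, this yields
$$\hat{\gamma}(\mathcal{E}(\hat{\mu}))=\int_\Omega\frac{\partial\hat{\gamma}}{\partial\eta}(\omega,\mathcal{E}(\omega,\hat{\mu}))\,\eta(d\omega).$$
Because $\eta$ is a probability measure and each conditional has total mass one, the right-hand side equals one if and only if $\partial\hat{\gamma}/\partial\eta(\omega,\mathcal{E}(\omega,\hat{\mu}))=1$ for $\eta$-a.e.\ $\omega$, i.e.\ if and only if the conditionals are carried by the Argmax sets. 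Combining this with the pointwise step shows that $\hat{\gamma}$ is a Nash equilibrium if and only if $\hat{\gamma}(\mathcal{E}(\hat{\mu}))=1$. Finally, since $\mathcal{E}(\hat{\mu})$ is closed, full $\hat{\gamma}$-mass on $\mathcal{E}(\hat{\mu})$ is equivalent to ${\rm supp}(\hat{\gamma})\subset\mathcal{E}(\hat{\mu})$, which completes the argument.

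The main obstacle I anticipate is the measure-theoretic bookkeeping of the disintegration: one must confirm that $\mathcal{E}(\hat{\mu})$ is Borel (it is closed) and that $\omega\mapsto\partial\hat{\gamma}/\partial\eta(\omega,\mathcal{E}(\omega,\hat{\mu}))$ is measurable, so that (\ref{diff_kappa}) may legitimately be applied to the indicator of $\mathcal{E}(\hat{\mu})$ rather than only to continuous functions. The remaining ingredients—affineness of the payoff in $\varrho$ and compactness of $\mathcal{U}$—are soft, so the crux is precisely this link between full $\hat{\gamma}$-measure and $\eta$-almost-everywhere concentrated conditionals.
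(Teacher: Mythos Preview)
Your proof is correct and follows essentially the same route as the paper: both reduce the statement to the fiberwise claim that the conditional $\partial\hat{\gamma}/\partial\eta(\omega,\cdot)$ is concentrated on $\mathcal{E}(\omega,\hat{\mu})$ for $\eta$-a.e.\ $\omega$, and then lift this to the global support condition via the disintegration. The only cosmetic difference is that the paper tests against continuous $\varphi$ vanishing on $\mathcal{E}(\hat{\mu})$ rather than against the indicator, which sidesteps the measurability bookkeeping you flag.
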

\begin{proof}First assume that $\hat{\gamma}$ is a Nash equilibrium.
Let $\Omega^+[\hat{\gamma}]$ be a set of $\omega\in \Omega$ such that inequality (\ref{nash_ineq}) holds. Since $\hat{\gamma}$ is a Nash equilibrium, $\eta(\Omega\setminus \Omega^+[\hat{\gamma}])=0$.
For each $\omega\in \Omega^+[\gamma]$ the  following inclusion holds $${\rm supp}\left(\frac{\partial\hat{\gamma}}{\partial \eta}(\omega,\cdot)\right)\subset\mathcal{E}(\omega,\hat{\mu}).$$

Let $\varphi\in C(\Omega\times\mathcal{U})$ be such that $\varphi(\omega,\alpha)=0$ for $(\omega,\alpha)\in\mathcal{E}(\hat{\mu})$.

We have that $$\iou\varphi(\omega,\alpha)\hat{\gamma}(d(\omega,\alpha))= \int_{\Omega^+}\eta(d\omega)\int_\mathcal{U}\varphi(\omega,\alpha)\frac{\partial\hat{\gamma}}{\partial\eta}(\omega,d\alpha)=0. $$ Therefore, ${\rm supp}(\hat{\gamma})\subset\mathcal{E}(\hat{\mu})$.

Now assume that ${\rm supp}(\hat{\gamma})\subset\mathcal{E}(\hat{\mu}) $ for $(\hat{y},\hat{\mu})=X[\hat{\gamma}]$. This mean that for $\eta$-almost all $\omega\in\Omega$ ${\rm supp}(\partial\hat{\gamma}/\partial\eta(\omega,\cdot))\subset\mathcal{E}(\omega,\mu). $ Therefore, for $\eta$-almost all $\omega\in\Omega$, all $\alpha\in {\rm supp}(\frac{\partial\hat{\gamma}}{\partial\eta}(\omega,\cdot))$ and all $\beta\in\mathcal{U}$
$$%\begin{equation}\label{J_beta_alpha}
J[\omega,\beta,\hat{\mu}]\leq J[\omega,\alpha,\hat{\mu}].
$$%\end{equation}
Integration of this inequality with respect to the measure $\frac{\partial\hat{\gamma}}{\partial\eta}(\omega,d\alpha)$ and with respect to the measure $\varrho(d\beta)$ gives inequality (\ref{nash_ineq}).
\end{proof}

\begin{Th}\label{th_equilibrium} There exists a Nash equilibrium profile of strategies.
\end{Th}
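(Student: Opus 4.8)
The plan is to recast the Nash equilibrium as a fixed point of a set-valued map on the compact convex set $\mathcal{M}'$ and to apply the Kakutani--Fan--Glicksberg fixed point theorem. By the preceding proposition it suffices to produce a profile $\hat{\gamma}$ with $\mathrm{supp}(\hat{\gamma})\subset\mathcal{E}(\hat{\mu})$ for $(\hat{y},\hat{\mu})=X[\hat{\gamma}]$. Accordingly I would define a correspondence $\Psi\colon\mathcal{M}'\rightrightarrows\mathcal{M}'$ by declaring $\nu\in\Psi(\mu)$ iff there exists $\gamma\in\mathcal{D}$ with $\mathrm{supp}(\gamma)\subset\mathcal{E}(\mu)$ and $\nu[t]=x[t,\cdot,\cdot,\mu]\#\gamma$ for all $t$. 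The point of this choice is that a fixed point settles the theorem: if $\mu\in\Psi(\mu)$, the witnessing $\gamma$ satisfies $\mu[t]=x[t,\cdot,\cdot,\mu]\#\gamma$, so setting $y[\cdot,\omega,\alpha]=x[\cdot,\omega,\alpha,\mu]$ exhibits $(y,\mu)$ as a process generated by $\gamma$; the uniqueness part of Proposition~\ref{pl_process} gives $(y,\mu)=X[\gamma]$, and since $\mathrm{supp}(\gamma)\subset\mathcal{E}(\mu)=\mathcal{E}(\hat{\mu})$ the profile $\gamma$ is a Nash equilibrium.

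The preparatory step is continuity. I would first check that the outcome $J[\omega,\alpha,\mu]$ is jointly continuous on the compact space $\Omega\times\mathcal{U}\times\mathcal{M}'$. This rests on continuous dependence of the trajectory $x[\cdot,\omega,\alpha,\mu]$ on $(\omega,\alpha,\mu)$: continuity in $\mu$ is the Gronwall estimate \eqref{x_estima_gronwall}, continuity in $\omega$ comes from continuity of $x_0$, and continuity in $\alpha$ from weak convergence of control measures together with continuity of $f$; combined with continuity of $\sigma$ and $g$ this yields joint continuity of $J$. By Berge's maximum theorem the value $\mathcal{V}(\omega,\mu)=\max_{\alpha\in\mathcal{U}}J[\omega,\alpha,\mu]$ is then continuous and $\omega\mapsto\mathcal{E}(\omega,\mu)$ is upper semicontinuous with nonempty compact values. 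In particular $\Delta(\omega,\alpha,\mu)\triangleq\mathcal{V}(\omega,\mu)-J[\omega,\alpha,\mu]$ is a continuous nonnegative function on the compact product that vanishes exactly on $\mathcal{E}(\mu)$; this representation of the support constraint will drive the limit argument below.

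The values of $\Psi$ are well behaved. Nonemptiness follows from a measurable selection theorem applied to the upper semicontinuous correspondence $\omega\mapsto\mathcal{E}(\omega,\mu)$: a selector $\alpha(\cdot)$ gives $\gamma=(\mathrm{id},\alpha)\#\eta\in\mathcal{D}$ with the required support, and $x[\cdot,\cdot,\cdot,\mu]\#\gamma\in\mathcal{M}'$ by the support-and-Lipschitz estimates of Proposition~\ref{pl_process}. The values are convex because $\gamma\mapsto x[t,\cdot,\cdot,\mu]\#\gamma$ is affine and the support condition is preserved under convex combinations, and they are closed because $\mathcal{D}$ is compact and the pushforward is continuous for fixed $\mu$.

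The main obstacle is the closed graph (upper semicontinuity) of $\Psi$. Suppose $\mu_k\to\mu$ and $\nu_k\to\nu$ in $\mathcal{M}'$ with $\nu_k\in\Psi(\mu_k)$ via $\gamma_k$; by compactness of $\mathcal{D}$ pass to a subsequence $\gamma_k\to\gamma$. The constraint $\mathrm{supp}(\gamma_k)\subset\mathcal{E}(\mu_k)$ is equivalent to $\int\Delta(\omega,\alpha,\mu_k)\,\gamma_k(d(\omega,\alpha))=0$; since $\Delta$ is uniformly continuous on the compact product, $\Delta(\cdot,\cdot,\mu_k)\to\Delta(\cdot,\cdot,\mu)$ uniformly, and weak convergence of $\gamma_k$ yields $\int\Delta(\omega,\alpha,\mu)\,\gamma(d(\omega,\alpha))=0$, whence $\mathrm{supp}(\gamma)\subset\{\Delta(\cdot,\cdot,\mu)=0\}=\mathcal{E}(\mu)$ because $\Delta\ge 0$ is continuous. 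The relation $\nu[t]=x[t,\cdot,\cdot,\mu]\#\gamma$ is then obtained by passing to the limit in $\nu_k[t]=x[t,\cdot,\cdot,\mu_k]\#\gamma_k$, using that $x[t,\cdot,\cdot,\mu_k]\to x[t,\cdot,\cdot,\mu]$ uniformly by \eqref{x_estima_gronwall} while $\gamma_k\to\gamma$ weakly. Thus $\nu\in\Psi(\mu)$, and the Kakutani--Fan--Glicksberg theorem supplies a fixed point of $\Psi$, which by the first paragraph is a Nash equilibrium profile.
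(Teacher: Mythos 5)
Your proposal is correct and follows essentially the same route as the paper: the paper also reduces the problem to a Kakutani--Fan--Glicksberg fixed point for a correspondence combining the pushforward map $\nu[t]=x[t,\cdot,\cdot,\mu]\#\gamma$ with the best-response constraint $\mathrm{supp}(\gamma)\subset\mathcal{E}(\mu)$, the only difference being that the paper keeps the product form $\mathcal{G}(\mu,\gamma)=\{(B(\mu,\gamma),\xi):\xi\in\mathcal{F}(\mu)\}$ on $\mathcal{M}'\times\mathcal{D}$ while you project onto $\mathcal{M}'$. Your closed-graph argument via the continuous gap function $\Delta$ is a tidy substitute for the paper's separation argument with the compact neighbourhoods $Q\subset Q_\varepsilon$, but it is the same underlying upper-semicontinuity fact.
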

\begin{proof}
To prove the existence of the Nash equilibrium profile we construct the multivalued map such that its fixed points are Nash equilibria in the game with infinitely many players.

First let us define the function $B:\mathcal{M}'\times \mathcal{D}\rightarrow \mathcal{M}'$ by the following rule: $\nu=B(\mu,\gamma)$ if $\nu[t]=x[t,\cdot,\cdot,\mu]\#\gamma$, i.e.
\begin{equation}\label{B_def}
  \int_{\mathbb{R}^n}\phi(x)\nu[t](dx)= \iou\phi(x[t,\omega,\alpha,\mu])\gamma(d(\omega,\alpha)) \ \ \forall\phi\in C_b(\mathbb{R}^n).
\end{equation}
The function $B$ is well-defined (the proof is analogous to the proof of correctness of the definition of $A$ in the proof of Proposition \ref{pl_process}).

Now let us show that $B$ is continuous. Assume the converse. This means that there exist   $\mu_k\rightarrow\mu$, $\gamma_k\rightarrow \gamma$, as $k\rightarrow\infty$ such that
$$\lim_{k\rightarrow\infty}\mathcal{W}(B(\mu_k,\gamma_k),B(\mu,\gamma))=\lim_{k\rightarrow\infty}\sup_{t\in [0,T]}W(B(\mu_k,\gamma_k)[t],B(\mu,\gamma)[t])>0.$$

By extracting subsequences we can assume that there exists $t\in [0,T]$ satisfying the property
\begin{equation}\label{B_discont}
\lim_{k\rightarrow\infty}W(B(\mu_k,\gamma_k)[t],B(\mu,\gamma)[t])>0.
\end{equation}
Recall that
\begin{multline*}
  W(B(\mu_k,\gamma_k)[t],B(\mu,\gamma)[t])\\= \sup\Bigl\{\int_{\mathbb{R}^n}\phi(x)B(\mu_k,\gamma_k)[t](dx)-\int_{\mathbb{R}^n}\phi(x)B(\mu,\gamma)[t](dx):\phi\in{\rm Lip}_1\Bigr\}.
\end{multline*}

Therefore, (\ref{B_discont}) can be reformulated in the following way: there exists a sequence $\{\phi_k\}\subset {\rm Lip}_1\cap C(G)$ such that
$$\lim_{k\rightarrow\infty}\left[\int_G\phi_k(x)B(\mu_k,\gamma_k)[t](dx)-\int_G\phi_k(x)B(\mu,\gamma)[t](dx)\right]>0. $$

Without loss of generality we can assume that there exists a function $\phi_*\in {\rm Lip}_1\cap C(G)$ such that $\|\phi_{k}-\phi_*\|\rightarrow 0$, as $k\rightarrow\infty$.

We have that
\begin{multline*}\int_{G}\phi_{k}(x)B(\mu_{k},\gamma_{k})[t](dx) -\int_{G}\phi_{k}(x)B(\mu,\gamma)[t](dx) \\=\iou\phi_{k}(x[t,\omega,\alpha,\mu_{k}])\gamma_{k}(d(\omega,\alpha))- \iou\phi_{k}(x[t,\omega,\alpha,\mu])\gamma(d(\omega,\alpha))\\=
\iou\phi_{k}(x[t,\omega,\alpha,\mu_k])\gamma_{k}(d(\omega,\alpha))
-\iou\phi_k(x[t,\omega,\alpha,\mu])\gamma_{k}(d(\omega,\alpha))\\+
\iou\phi_{k}(x[t,\omega,\alpha,\mu])\gamma_{k}(d(\omega,\alpha))
-\iou\phi_*(x[t,\omega,\alpha,\mu])\gamma_{k}(d(\omega,\alpha))\\
+\iou\phi_*(x[t,\omega,\alpha,\mu])\gamma_{k}(d(\omega,\alpha)) -\iou\phi_*(x[t,\omega,\alpha,\mu])\gamma(d(\omega,\alpha))\\+
\iou\phi_*(x[t,\omega,\alpha,\mu])\gamma(d(\omega,\alpha))
-\iou\phi_{k}(x[t,\omega,\alpha,\mu])\gamma(d(\omega,\alpha))
.
\end{multline*}
From this and (\ref{x_estima_gronwall}) we get the inequality
\begin{multline*}
  W(B(\mu_{k},\gamma_{k})[t],B(\mu,\gamma)[t])\leq  C_1T\mathcal{W}(\mu_{k},\mu)+2\|\phi_{k}-\phi_*\|\\+%\sup_{\phi\in {\rm Lip}_1}
  \left[
\iou\phi_*(x[t,\omega,\alpha,\mu])\gamma_k(d(\omega,\alpha))- \iou\phi_*(x[t,\omega,\alpha,\mu])\gamma(d(\omega,\alpha))\right].
\end{multline*}
Since  for all $\varphi\in C(\Omega\times\mathcal{U})$ $$\iou\varphi(\omega,\alpha)\gamma_k(d(\omega,\alpha))\rightarrow \iou\varphi(\omega,\alpha)\gamma(d(\omega,\alpha))$$ we obtain that
$$\lim_{k\rightarrow \infty}W(B(\mu_{k},\gamma_{k})[t],B(\mu,\gamma)[t])=0. $$ This contradicts with (\ref{B_discont}). Thus, $B$ is continuous.

Now let  $\mathcal{F}(\mu)$ be a set of all profiles $\xi\in\mathcal{D}$ such that ${\rm supp}(\xi)\subset\mathcal{E}(\mu)$.
The set $\mathcal{F}(\mu)$ is nonempty. Indeed, since the correspondence $\omega\mapsto\mathcal{E}(\omega,\mu)$ is upper semicontinuous, $\mathcal{E}(\omega,\mu)$ admits a measurable selector    $\alpha_*(\omega,\mu)$. Define the measure $\xi_*$ by the rule
$$\iou\varphi(\omega,\alpha)\xi_*(d(\omega,\alpha))=\int_\Omega\varphi(\omega,\alpha_*(\omega,\mu))\eta(d\omega). $$ We have that ${\rm supp}(\xi_*)\in\mathcal{E}(\mu)$. Thus, $\xi_*\in \mathcal{F}(\mu)$.

Moreover, $\mathcal{F}(\mu)$ is convex.

Further we shall prove the closeness of the graph of the mapping $\mathcal{F}$.

Let $\mu_k\rightarrow\mu$, $\xi_k\rightarrow\xi$, $\xi_k\in\mathcal{F}(\mu_k)$. We shall show that $\xi\in\mathcal{F}(\mu)$. It suffices to show that ${\rm supp}(\xi)\subset\mathcal{E}(\mu)$. Assume the converse. Let there exist a set $Q\subset\Omega\times\mathcal{U}$ such that $Q\cap \mathcal{E}(\mu)=\varnothing$ and $\xi(Q)>0$. We can choose $Q$ to be compact. Since $\mathcal{E}(\mu)$ is compact also, there exists $\varepsilon$ such that the set
$Q_\varepsilon=\{(\omega,\alpha):d((\omega,\alpha),Q)\leq\varepsilon\}$ doesn't intersect with $\mathcal{E}(\mu)$. Here $d$ is a distance between $(\omega,\alpha)$ and $Q$:
$$d((\omega,\alpha),Q)=\min\{d_\Omega(\omega,\omega')+\|\alpha-\alpha'\|_w:(\omega',\alpha')\in Q\}. $$

There exists $N$ such that $Q_\varepsilon\cap\mathcal{E}(\mu_k)=\varnothing$ for all $k>N$. In the contrary case   there exists a sequence $\{k_l\}$ such that $(\omega_{k_l},\alpha_{k_l})\in Q_\varepsilon\cap\mathcal{E}(\mu_{k_l})$. We can assume that $(\omega_{k_l},\alpha_{k_l})\rightarrow (\omega^*,\alpha^*)$, as $l\rightarrow\infty$. Since the dependence $\mu\mapsto \mathcal{E}(\mu)$ is upper semicontinuous we have that $(\omega^*,\alpha^*)\in Q_\varepsilon\cap \mathcal{E}(\mu)$. This contradicts with the emptiness of $Q_\varepsilon\cap \mathcal{E}(\mu)$.

Now let $\varphi\in C({\Omega}\times\mathcal{U})$ be such that $\varphi=1$ on $Q$ and $\varphi=0$ outside $Q_\varepsilon$.
We have that $$\iou\varphi(\omega,\alpha)\xi_k(d(\omega,\alpha))=0, \ \ k>N,\mbox{ and }\iou\varphi(\omega,\alpha)\xi(d(\omega,\alpha))\geq \xi(Q)>0. $$ This contradicts with the assumption of weak convergence of  the sequence  $\{\xi_k\}$ to $\xi$. Thus, $\xi\in\mathcal{F}(\mu)$.

Define the multi-valued map $\mathcal{G}:\mathcal{M}'\times\mathcal{U}\multimap\mathcal{M}'\times\mathcal{U}$ by the rule
$$\mathcal{G}(\mu,\gamma)\triangleq\{(\nu,\xi):\nu=B(\mu,\gamma), \ \ \xi\in\mathcal{F}(\mu)\}. $$ The map $\mathcal{G}$ is upper semicontinuous. The set $\mathcal{M}'\times\mathcal{U}$ is compact. Therefore, by Kakutani--Fan--Glicksberg theorem there exists a fixed point of $\mathcal{G}$. Denote it by $(\hat{\mu},\hat{\gamma})$. Since $\hat{\mu}=B(\hat{\mu},\hat{\gamma})$ we have that the pair $(\hat{y},\hat{\mu})=X[\hat{\gamma}]$. Here $\hat{y}$ is such that $\hat{y}[\cdot,\omega,\alpha]=x[\cdot,\omega,\alpha,\hat{\mu}]$. Further, since ${\rm supp}(\hat{\gamma})\subset\mathcal{E}(\hat{\mu})$ inequality (\ref{nash_ineq}) holds.
\end{proof}

\section{Existence of the Solution of MFG System}\label{sec_dynamic_programming}

\begin{Th}\label{th_existence} There exists a minimax solution $(V,\mu)$ of  system~(\ref{HJB}),~(\ref{meanfield}).
\end{Th}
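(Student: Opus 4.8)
The plan is to manufacture the solution directly from the Nash equilibrium supplied by Theorem \ref{th_equilibrium}. Let $\hat\gamma$ be a Nash equilibrium profile and let $(\hat{y},\hat{\mu})=X[\hat\gamma]$ be the associated process; I take $\hat\mu$ as the measure component of the prospective solution. I define the value component by
$$V(t_0,x_0)\triangleq\sup\Bigl\{\sigma(x(T),\hat\mu[T])-\int_{t_0}^Tg(t,x(t),\hat\mu[t],u(t))\,dt\Bigr\},$$
the supremum being taken over trajectories of $\dot{x}=f(t,x,\hat\mu[t],u(t))$ with $x(t_0)=x_0$. Since $\hat\mu\in\mathcal{M}$ is now a \emph{fixed} external field, the right-hand side is exactly the optimal control problem whose value function is characterized in Section \ref{sec_minimax} as the minimax solution of (\ref{HJB}); the Lipschitz estimates on $f$ and the continuity of $g,\sigma$ give $V\in C(\ir)$. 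Hence condition (1) of Definition \ref{def_solution} holds automatically, together with $V(T,\cdot)=\sigma(\cdot,\hat\mu[T])$.

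Condition (2) is immediate from the construction of the auxiliary game. With $\Omega=G_0\times[0,1]$, $\eta=m_0\times\lambda$ and $x_0(\omega',\omega'')=\omega'$, the $\Omega$-marginal of $\hat\gamma$ is $\eta$ and $\hat{y}[0,\omega,\alpha]=x_0(\omega)$, so $\hat\mu[0]=\hat{y}[0,\cdot,\cdot]\#\hat\gamma=x_0\#\eta=m_0$.

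The substantive step is condition (3). For $(\omega,\alpha)$ I set $x(\cdot)=x[\cdot,\omega,\alpha,\hat\mu]$ and
$$z(t)=V(0,x_0(\omega))+\itp{t}g(\tau,x[\tau,\omega,\alpha,\hat\mu],\hat\mu[\tau],u)\alpha(d(\tau,u)),$$
and define $\Phi(\omega,\alpha)=(x(\cdot),z(\cdot))\in C([0,T],\mathbb{R}^{n+1})$. The map $\Phi$ is continuous (by the Gronwall bound (\ref{x_estima_gronwall}) in the $\mu$-argument and continuity in $\alpha$ with respect to the weak norm), hence Borel, and I put $\chi\triangleq\Phi\#\hat\gamma$. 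By the characterization of Nash equilibria, ${\rm supp}(\hat\gamma)\subset\mathcal{E}(\hat\mu)$, so for $\hat\gamma$-almost every $(\omega,\alpha)$ the control $\alpha$ is optimal for player $\omega$, i.e. $J[\omega,\alpha,\hat\mu]=V(0,x_0(\omega))$. The key point is then the optimality (dynamic programming) principle for $V$: along such an optimal trajectory one has $z(t)=V(t,x(t))$ for every $t$, while $(\dot{x}(t),\dot{z}(t))$ is an average of pairs $(f,g)$ and therefore lies in $E^-(t,x(t),\hat\mu[t])$. Consequently $\Phi(\omega,\alpha)\in\mathcal{S}[V,\hat\mu]$ for $\hat\gamma$-almost all $(\omega,\alpha)$, so $\chi$ is concentrated on $\mathcal{S}[V,\hat\mu]$. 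Finally, for $\phi\in C_b(\mathbb{R}^n)$,
$$\int_{\mathcal{S}[V,\hat\mu]}\phi(x(t))\chi(d(x(\cdot),z(\cdot)))=\iou\phi(x[t,\omega,\alpha,\hat\mu])\hat\gamma(d(\omega,\alpha))=\int_{\mathbb{R}^n}\phi(x)\hat\mu[t](dx),$$
the last equality being the defining relation $\hat\mu[t]=\hat{y}[t,\cdot,\cdot]\#\hat\gamma$ of the process. Thus $\hat\mu[t]=e_t\#\chi$ and condition (3) holds.

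I expect the main obstacle to be condition (3), specifically the passage from ``$\alpha$ is a Nash-optimal control for player $\omega$ in the fixed field $\hat\mu$'' to ``$(x(\cdot),z(\cdot))$ is viable in $\mathrm{gr}\,V$'': one must verify that optimality forces $z(t)=V(t,x(t))$ \emph{identically} in $t$, which is precisely Subbotin's optimality principle for the minimax solution. This is also where the relaxed-control (control measure) formulation is essential, both because it guarantees the supremum defining $V$ coincides with the value over ordinary controls and because compactness of $\mathcal{U}$ guarantees it is \emph{attained} along $\hat\gamma$-a.e. trajectory, so that $\mathcal{E}(\omega,\hat\mu)\neq\varnothing$. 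The remaining ingredients — continuity of $V$, measurability of $\Phi$, and the push-forward $\chi=\Phi\#\hat\gamma$ — are routine given the compactness already established for $\mathcal{U}$, $\mathcal{M}'$ and $\mathcal{D}$.
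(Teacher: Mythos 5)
Your proof is correct and follows essentially the same route as the paper: take the Nash equilibrium $\hat\gamma$ from Theorem \ref{th_equilibrium}, let $V$ be the value function (\ref{V_def}) for the fixed external field $\hat\mu$, and push $\hat\gamma$ forward to a measure $\chi$ on trajectories in ${\rm gr}\,V$. You in fact supply more detail than the paper's own three-sentence proof, notably the dynamic-programming argument that $\hat\gamma$-a.e.\ trajectory satisfies $z(t)=V(t,x(t))$ identically, which the paper only asserts.
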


Before we prove the theorem, let us fix some notation.
If $\mu$ is an external field, $\alpha\in\mathcal{U}$, $(t_*,x_*)\in\ir$, then denote by $x(\cdot,t_*,x_*,\alpha,\mu)$ the solution of the equation
$$x(t)=x_*+\int_{[t_*,t]\times P}f(\tau,x(\tau),\mu[\tau],u)\alpha(d(\tau,u)). $$ In addition, put
$$w(t,t_*,x_*,\alpha,\mu)=-\int_{[t_*,t]\times P}g(t,x(\tau,t_*,x_*,\alpha,\mu),\mu[t],u)\alpha(d(\tau,u)). $$

%If $\hat{\gamma}$ is a Nash equilibrium, and $X[\hat{\gamma}]=(\hat{y},\hat{\mu})$, then put $$\hat{s}[t,\omega,\alpha]=-\int_0^tg(t,y[\tau,\omega,\alpha],\hat{\mu}[\tau],u)\alpha(d(\tau,u)).$$ Note that $J[\omega,\alpha,\hat{\mu}]=\sigma(\hat{y}[T,\omega,\alpha],\hat{\mu}[T])+\hat{s}[T,\omega,\alpha]$.

Further, if $\hat{\gamma}$ is a Nash equilibrium with $X[\hat{\gamma}]=(\hat{y},\hat{\mu})$ denote $$\hat{x}(\cdot,t_*,x_*,\alpha)=x(\cdot,t_*,x_*,\alpha,\hat{\mu}), \  \hat{w}(t,t_*,x_*,\alpha)={w}(t,t_*,x_*,\alpha,\hat{\mu}).$$

Define the value function $V$ by the rule
\begin{equation}\label{V_def}
V(t_*,x_*)=\max\{\sigma(\hat{x}(T,t_*,x_*,\alpha),\hat{\mu}[T])+\hat{w}(T,t_*,x_*,\alpha):\alpha\in\mathcal{U}\}. \end{equation}
The number $V(t_*,x_*)$ is an optimal outcome of the sampling player placed in the initial time $t_*$ at the position $x_*$ in the case when the external field is $\hat{\mu}$.

Since the initial state of player $\omega$ is $x_0(\omega)$ we have that $V(0,x_0(\omega))=\max\{J[\omega,\alpha,\hat{\mu}]:\alpha\in\mathcal{U}\}$.

%Put $${\rm Sol}\triangleq\{(\hat{x}[t,0,x_*,\alpha],V(0,x_*)+\hat{w}[t,0,x_*,\alpha]):\alpha\in\mathcal{U}, x_*\in G_0\}.$$

%The set ${\rm Sol}$ is a compact subsets of $C[t_*,T]$.

Define the measure $\chi$ on $C([0,T],\mathbb{R}^{n+1})$ by the rule: for any measurable set $Y\subset C([0,T],\mathbb{R}^{n+1})$
$$\chi(Y)=\hat{\gamma}\{(\omega,\alpha): (\hat{y}[\cdot,\omega,\alpha],V(\cdot,y[\cdot,\omega,\alpha])\in Y\}. $$ The measure $\chi$ is used for examination whether the pair $(V,\hat{\mu})$ is a generalized solution of system (\ref{HJB}), (\ref{meanfield}).

\begin{proof}[Proof of Theorem \ref{th_existence}]
First, let us recall that for given system system (\ref{HJB}), (\ref{meanfield}) we can define the differential game with infinitely many players with  dynamics (\ref{external_field_sys}) and the payoff given by (\ref{payoff}). To do this put $\Omega=G_0\times [0,1]$, $\eta=m_0\times \lambda$. If $\omega=(\omega',\omega'')$, $\omega'\in G_0$, $\omega''\in [0,1]$, then put $x_0(\omega)=\omega'$. There exists an equilibrium $\hat{\gamma}$, let $(\hat{y},\hat{\mu})=X[\hat{\gamma}]$.

Note that $V$ defined by rule (\ref{V_def}) is the unique solution of equation (\ref{HJB}) for $\mu=\hat{\mu}$. The support of the measure $\chi$ is the set of solutions to (\ref{lower_in}) viable in the graph of $V$. The measure $\hat{\mu}[t]$ is an image of $\chi$ by the map $e_t$.

\end{proof}

\section{Approximate Equilibrium in the Game with Finite Number of Players}\label{sec_approximate}
In this section we work with the additional assumption: $\sigma$ and $g$ are Lipschitz continuous with respect to phase variable and measure: i.e. there exist constants $L_{\sigma,x}$, $L_{\sigma,m}$, $L_{g,x}$ and $L_{g,m}$ such that for all $t\in[0,T]$, $x',x''\in G$, $m,m''\in \mathcal{P}(G)$, $u\in P$
$$|\sigma(x',m')- \sigma(x'',m'')|\leq L_{\sigma,x}\|x'-x''\|+L_{\sigma,m}W(m',m''),$$
$$|g(t,x',m',u)- g(t,x'',m'',u)|\leq L_{g,x}\|x'-x''\|+L_{g,m}W(m',m'').$$

If $\mathbf{x}=(x_{N,0}^1,\ldots,x_{N,0}^{N})\in (G_0)^N$, then denote $$\delta_{\mathbf{x}}^N=\frac{1}{N}(\delta_{x_{N,0}^1}+\ldots+\delta_{x_{N,0}^N}). $$ Here $\delta_x$ denote the Dirac measure concentrated at $x$.
\begin{Lm}\label{lm_measure_repres}
There exist measures $m_N^1,\ldots,m_N^N$ such that
\begin{enumerate}
  \item $m_0=m_N^1+\ldots+m_N^N$;
  \item $m_N^i(G_0)=1/N$;
  \item $$W(m_0,\delta_{\mathbf{x}}^N)=\sum_{i=1}^N\int_{G_0}\|x-x_{N,0}^i\|m_N^i(dx). $$
\end{enumerate}
\end{Lm}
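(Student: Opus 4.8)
The plan is to read off the measures $m_N^i$ from an optimal coupling between $m_0$ and the empirical measure $\delta_{\mathbf{x}}^N$, exploiting the fact that the second measure is supported on finitely many points. Recall that the paper's definition of $W$ already contains the primal (Kantorovich) form $W(m',m'')=\inf_{\pi\in\Pi(m',m'')}\int_{\mathbb{R}^n\times\mathbb{R}^n}\|x'-x''\|\pi(dx',dx'')$. First I would show that this infimum is attained for the pair $(m_0,\delta_{\mathbf{x}}^N)$: since both measures are supported on the compact set $G_0$, every coupling $\pi\in\Pi(m_0,\delta_{\mathbf{x}}^N)$ is concentrated on $G_0\times G_0$, so $\Pi(m_0,\delta_{\mathbf{x}}^N)$ is weakly compact, while $\pi\mapsto\int\|x'-x''\|\pi(dx',dx'')$ is weakly continuous because its integrand is continuous and bounded on $G_0\times G_0$. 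Hence there is an optimal plan $\pi^*\in\Pi(m_0,\delta_{\mathbf{x}}^N)$ with $W(m_0,\delta_{\mathbf{x}}^N)=\int\|x'-x''\|\pi^*(dx',dx'')$.

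Next I would disintegrate $\pi^*$ over the finite support of its second marginal. Let $y_1,\ldots,y_K$ be the distinct values occurring among $x_{N,0}^1,\ldots,x_{N,0}^N$, and let $k_j$ be the number of indices $i$ with $x_{N,0}^i=y_j$, so that $\delta_{\mathbf{x}}^N(\{y_j\})=k_j/N$. Putting $\nu^j(Y)=\pi^*(Y\times\{y_j\})$ for Borel $Y\subset\mathbb{R}^n$, one obtains nonnegative measures on $G_0$ with $\nu^j(G_0)=\delta_{\mathbf{x}}^N(\{y_j\})=k_j/N$. Since the second marginal of $\pi^*$ is concentrated on $\{y_1,\ldots,y_K\}$, summing the fibers gives $\sum_j\nu^j=m_0$, and because $\pi^*$ charges only the fibers $\mathbb{R}^n\times\{y_j\}$, the cost splits as $\int\|x'-x''\|\pi^*(dx',dx'')=\sum_j\int_{G_0}\|x-y_j\|\nu^j(dx)$.

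Finally, for each index $i$, writing $y_{j(i)}=x_{N,0}^i$, I would set $m_N^i=(1/k_{j(i)})\nu^{j(i)}$, which distributes each fiber measure $\nu^j$ in equal shares over the $k_j$ indices pointing at $y_j$. Then the mass condition is immediate, $m_N^i(G_0)=(1/k_j)(k_j/N)=1/N$; the decomposition $m_0=\sum_i m_N^i$ follows since $\sum_i m_N^i=\sum_j\nu^j=m_0$; and the distance identity holds because the transport cost of each piece depends only on the common target $y_j$, so that $\sum_{i=1}^N\int_{G_0}\|x-x_{N,0}^i\|m_N^i(dx)=\sum_j\int_{G_0}\|x-y_j\|\nu^j(dx)=W(m_0,\delta_{\mathbf{x}}^N)$.

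The only genuinely delicate points are the attainment of the optimal plan, which is standard and already implicit in the paper's use of the infimum form of $W$, and the bookkeeping required when several of the $x_{N,0}^i$ coincide. The latter is exactly what the factor $1/k_{j(i)}$ resolves: since all indices attached to $y_j$ carry the same transport cost, splitting $\nu^j$ into equal shares preserves both the mass constraint $m_N^i(G_0)=1/N$ and the cost identity. I expect this multiplicity handling, rather than any hard estimate, to be the main thing to get right.
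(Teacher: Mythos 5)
Your proof is correct and takes essentially the same route as the paper: both obtain an optimal coupling $\pi^*\in\Pi(m_0,\delta_{\mathbf{x}}^N)$ (attained because everything is supported in the compact set $G_0$) and then disintegrate it over the discrete second marginal to read off the $m_N^i$. Your explicit equal splitting of each fiber measure $\nu^j$ by the multiplicity $k_j$ is exactly what the paper's formula $m_N^i(\cdot)=\frac{1}{N}h(x_{N,0}^i,\cdot)$ accomplishes implicitly via the disintegration kernel, so the two arguments coincide.
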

\begin{proof}
We have that $$W(m_0,\delta^N_{\mathbf{x}})=\inf\left\{\int_{\mathbb{R}^n\times\mathbb{R}^n}\|x'-x''\|\pi(d(x',x'')):\pi\in \Pi(m_0,\delta_{\mathbf{x}}^N)\right\}. $$ Since ${\rm supp}(m_0)$ and ${\rm supp}(\delta_{\mathbf{x}}^N)$ are subsets of $G_0$, the supports of all measures of $\Pi(m_0,\delta_{\mathbf{x}}^N)$ lies in $G_0\times G_0$. Recall that $G_0$ is compact. Therefore, there exists a measure $\bar{\pi}$ such that
\begin{equation}\label{wasser_delta}
W(m_0,\delta^N_{\mathbf{x}})=\int_{G_0\times G_0}\|x'-x''\|\bar{\pi}(d(x',x'')).
\end{equation}

Since $\bar{\pi}(G_0\times  Y)=\delta_{\mathbf{x}}^N( Y)$ we have that there exists a function $h:G_0\times\mathcal{B}(G_0)$ such that for any $\Upsilon\subset G_0\times G_0$
 \begin{equation}\label{pi_h_repres}
 \bar{\pi}(\Upsilon)=\int_{\Upsilon_2}h(y,\Upsilon_1(y))\delta^N_{\mathbf{x}}(dy).
\end{equation}
Here $$\Upsilon_1(y)=\{x:\exists y\ \ (x,y)\in\Upsilon\}\ \ \Upsilon_2=\{y:\exists x\ \ (x,y)\in \Upsilon\}. $$

Denote $$m_N^i(\cdot)=\frac{1}{N}h(x_{N,0}^i,\cdot).$$ From (\ref{pi_h_repres}) we have that the first and second statements of the Lemma are fulfilled.

Further,
$$W(m_0,\delta^N_{\mathbf{x}})= \int_{G_0}\int_{G_0}\|x'-x''\|h(x'',dx')\delta_{\mathbf{x}}^N(dx''). $$ Therefore, the third statement of the Lemma is also fulfilled.
\end{proof}

Recall that
$$\mathcal{S}[V,\hat{\mu}]=\{(x(\cdot),z(\cdot)):\dot{x}\in E^-(t,x,\hat{\mu}[t]), \ \ z(t)=V(t,x(t))\}, $$
$$\mathcal{S}[V,t_*,x_*,\hat{\mu}]=\{(x(\cdot),z(\cdot))\in \mathcal{S}[V,\hat{\mu}]:x(t_*)=x_*\}.$$

As it was mentioned above (see the proof of Proposition \ref{pl_infinitisimal})  there exists a system of measures $\chi_{0,x_*}$ such that for any $\varphi\in C(\mathcal{S}[V,\hat{\mu}])$
\begin{multline*}\int_{\mathcal{S}[V,\hat{\mu}]}\varphi(x(\cdot),z(\cdot))\chi(d(x(\cdot),z(\cdot)))\\= \int_{G_0}m_0(dx_*)\int_{\mathcal{S}[V,0,x_*,\hat{\mu}]}\varphi(x(\cdot),z(\cdot))\chi_{0,x_*}(d(x(\cdot),z(\cdot))). \end{multline*}

Define the measure $\chi_N^i$ by the rule: for all $\varphi\in C(\mathcal{S}[V,\hat{\mu}])$
\begin{multline*}\int_{\mathcal{S}[V,\hat{\mu}]}\varphi(x(\cdot),z(\cdot))\chi_N^i(x(\cdot),z(\cdot))\\= \int_{G_0}m_N^i(dx_*)\int_{\mathcal{S}[V,0,x_*,\hat{\mu}]}\varphi(x(\cdot),z(\cdot)) \chi_{x_*}(d(x(\cdot),z(\cdot))).\end{multline*}

Note that $\chi=\chi_N^1+\ldots+\chi_N^N$.

Put $\mu_N^i[t]=e_t\#\chi_N^i$.
The external field $\hat{\mu}$ is a sum of fields $\mu_N^i$, i.e. $$\hat{\mu}[t]={\mu}_N^1[t]+\ldots+{\mu}_N^N[t].$$

Now let us consider the $N$ player game.
Having an external field $\nu_N$ and a control measure $\alpha\in\mathcal{U}$ one can consider the motion of the player $i$  given by
$${x}_N^i(t,\alpha,\nu_N)=x(t,0,x_{N,0}^i,\alpha,\nu_N). $$ The outcome of  player $i$ is
\begin{multline*}J_N^i(\alpha,\nu_N)=\sigma(x_N^i(T,\alpha,\nu_N),\nu_N[T])-\itp{T} g(t,x_N^i(t,\alpha,\nu_N),\nu_N[t],u)\alpha(d(t,u))\\=\sigma(x(T,0,x_{N,0}^i,\alpha,\nu_N),\nu_N[T])+ w(T,0,x_{N,0}^i,\alpha,\nu_N).                           \end{multline*}

Below we assume that players use random open--loop strategies, i.e. player $i$ chooses the distribution of control measures $\varrho^i_N\in \mathcal{P}(\mathcal{U})$. Denote  $\varrho_N=(\varrho_N^1,\ldots,\varrho_N^N)$. If players use random strategies, then the distribution of the  states of player $i$ is given by the measure $\nu_N^i(t,\cdot,\varrho_N)$ satisfying the condition
$$\int_{\mathbb{R}^n}\phi(x)\nu_N^i(t,dx,\varrho_N) =\frac{1}{N}\int_{\mathbb{R}^n}\phi\left(x_N^i\left(t,\alpha,\nu_N(\cdot,\cdot,\varrho_N)\right)\right) \varrho^i_N(d\alpha).$$ Here we denote $$ \nu_N(t,\cdot,\varrho_N)=\sum_{i=1}^n\nu_N^i(t,\cdot,\varrho_N). $$  The outcome of player $i$ is
$$\bar{J}_N^i(\varrho_N)=\int_\mathcal{U}J_N^i(\alpha,\nu_N(\cdot,\cdot,\varrho_N))\varrho^i_N(d\alpha). $$

The existence of the motions $x_N^i(\cdot,\alpha,\nu_N)$ and of distributions of players' states is proved as Proposition \ref{pl_process}.

Now let us introduce $\varepsilon$-Nash equilibrium profile of strategies.
The correspondence which assigns to the motion $(x(\cdot),z(\cdot))\in \mathcal{S}[V,\hat{\mu}]$ the set of control measures $\beta$ such that $$x(\cdot)=\hat{x}(\cdot,0,x(0),\beta),\ \  z(\cdot)=\sigma(x(T),\hat{\mu})+ \hat{w}(\cdot,0,x(0),\beta)$$ is upper semicontinuous. Let  $\Theta[x(\cdot),z(\cdot)]$  be its measurable selector.

Let $\hat{\varrho}_N^i$ be a random strategy such that $\hat{\varrho}_N^i=N\cdot\Theta\#\chi_N^i$.

Denote $\hat{\xi}^i_N(t,\alpha)=x_N^i(t,\alpha,\nu_N(\cdot,\cdot,\hat{\varrho}_N))$. In addition, set $$\hat{\zeta}^i_N(t,\alpha)\triangleq- \itp{t}g(\tau,\hat{\xi}^i_N(\cdot,\alpha),\hat{\nu}_N(\cdot,\cdot,\hat{\varrho}_N^i),u)\alpha(d(\tau,u)), $$
$$\hat{\nu}_N^i(\cdot,\cdot)\triangleq{\nu}_N^i(\cdot,\cdot,\hat{\rho}_N),\ \ \hat{\nu}_N(\cdot,\cdot)\triangleq{\nu}_N(\cdot,\cdot,\hat{\rho}_N). $$

If player $j$ deviates, i.e. he plays with the strategy $\varrho_N^j$, then denote the corresponding profile of strategies  by $\hat{\varrho}_N|\varrho_N^j$. Put
$$\tilde{\nu}_{N|j}^i(\cdot,\cdot;\varrho^j_N)\triangleq\nu_N^i(\cdot,\cdot,\hat{\varrho}_N|\varrho_N^j),  \ \ \tilde{\nu}_{N|j}(\cdot,\cdot;\varrho^j_N)\triangleq\tilde{\nu}_{N|j}^1(\cdot,\cdot;\varrho^j_N)+\ldots+\tilde{\nu}_{N|j}^N(\cdot,\cdot;\varrho^j_N). $$ Moreover, denote $\tilde{\xi}^i_{N|j}(t,\alpha;\varrho_N^j)\triangleq x_N^i(t,\alpha,\tilde{\nu}_{N|j}(\cdot,\cdot,\tilde{\varrho}_N^j))$. $$\tilde{\zeta}^i_{N|j}(t,\alpha;\varrho_N^j)\triangleq- \itp{t}g(\tau,\tilde{\xi}^i_{N|j}(\cdot,\alpha;\varrho_N^j),\tilde{\nu}_{N|j}^i(\cdot,\cdot,\varrho_N^j),u) \alpha(d(\tau,u)), $$

\begin{Th}\label{th_N_eq}There exist positive constants $\hat{C}_1,\hat{C}_2,\hat{C}_3$such that for all $\varrho_N^j\in\mathcal{P}(\mathcal{U})$
$$\bar{J}_N^i(\hat{\varrho}_N)\geq \bar{J}_N^i(\hat{\varrho}_N|\varrho_N^j)-\hat{C}_1W(m_0,\delta_{\mathbf{x}}^N)-\hat{C}_2 N\max_{i=\overline{1,N}}\int_{G}\|x-x_{N,0}^i\|m_N^i(dx)-\hat{C}_3\frac{1}{N}.$$
 In particular, if
$$W(m_0,\delta_{\mathbf{x}}^N)\rightarrow 0, \ \   N\max_{i=\overline{1,N}}\int_{G}\|x-x_{N,0}^i\|m_N^i(dx)\rightarrow 0,\mbox{ при }N\rightarrow \infty, $$
then for any $\varepsilon$ there exists a number $N_0$ such that for all $N>N_0$ the profile of strategies $\hat{\varrho}_N$ is a $\varepsilon$-Nash equilibrium.
\end{Th}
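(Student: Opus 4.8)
The plan is to show that, under the profile $\hat{\varrho}_N$, the empirical field $\hat{\nu}_N$ stays uniformly close to the mean field $\hat{\mu}$, that a single unilateral deviation perturbs this field by only $O(1/N)$, and that $\hat{\varrho}_N^i$ is near-optimal against $\hat{\mu}$; combined with the Lipschitz dependence of the outcome on the field these three facts produce the three error terms. Throughout I take the deviating index equal to $i$ (for $j\neq i$ player $i$ does not deviate and only the $1/N$ term survives), so that the inequality is exactly the $\varepsilon$-Nash condition.

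First I would establish the field-stability estimate $\mathcal{W}(\hat{\nu}_N,\hat{\mu})\le C\,W(m_0,\delta_{\mathbf{x}}^N)$. Since $\hat{\mu}[t]=\sum_i\mu_N^i[t]$ with $\mu_N^i[t]=e_t\#\chi_N^i$ and $\hat{\nu}_N[t]=\sum_i\hat{\nu}_N^i[t]$, both $\mu_N^i[t]$ and $\hat{\nu}_N^i[t]$ are images of the same measure $\chi_N^i$ (of mass $1/N$), so coupling them through $\chi_N^i$ gives a transport plan whose cost equals the $\chi_N^i$-average of $\|x(t)-x(t,0,x_{N,0}^i,\Theta[x(\cdot),z(\cdot)],\hat{\nu}_N)\|$, where $x(t)$ is the ideal trajectory (start $x(0)$, field $\hat{\mu}$) and the second is the actual one (start $x_{N,0}^i$, same control, field $\hat{\nu}_N$). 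Splitting this difference into an initial-condition shift, bounded by $e^{L_{f,x}T}\|x(0)-x_{N,0}^i\|$, and a field shift controlled by (\ref{x_estima_gronwall}), then summing over $i$ and invoking Lemma \ref{lm_measure_repres} (so that $\sum_i\int\|x-x_{N,0}^i\|m_N^i(dx)=W(m_0,\delta_{\mathbf{x}}^N)$), yields
$$W(\hat{\nu}_N[t],\hat{\mu}[t])\le e^{L_{f,x}T}W(m_0,\delta_{\mathbf{x}}^N)+C_1\int_0^tW(\hat{\nu}_N[\tau],\hat{\mu}[\tau])\,d\tau,$$
which Gronwall's inequality closes. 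The same term-by-term coupling applied to $\hat{\varrho}_N|\varrho_N^i$ — for the $N-1$ unchanged players only the field differs, while the deviating player contributes a sub-probability of mass $1/N$ supported in $G$ — gives $\mathcal{W}(\tilde{\nu}_{N|i},\hat{\nu}_N)\le C/N$ uniformly in $\varrho_N^i$, hence $\mathcal{W}(\tilde{\nu}_{N|i},\hat{\mu})\le C\,(W(m_0,\delta_{\mathbf{x}}^N)+1/N)$.

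Next I would use the additional assumptions $L_{\sigma,x},L_{\sigma,m},L_{g,x},L_{g,m}$ together with (\ref{x_estima_gronwall}) to record $|J_N^i(\alpha,\nu)-J_N^i(\alpha,\nu')|\le C\,\mathcal{W}(\nu,\nu')$ uniformly in $\alpha$, and, by the same computation in the initial point, $|V(0,x')-V(0,x'')|\le L_V\|x'-x''\|$ with a constant $L_V$. On the one hand,
$$\bar{J}_N^i(\hat{\varrho}_N|\varrho_N^i)=\int_{\mathcal{U}}J_N^i(\alpha,\tilde{\nu}_{N|i})\varrho_N^i(d\alpha)\le\sup_{\alpha}J_N^i(\alpha,\hat{\mu})+C\,\mathcal{W}(\tilde{\nu}_{N|i},\hat{\mu})=V(0,x_{N,0}^i)+C\,\mathcal{W}(\tilde{\nu}_{N|i},\hat{\mu}),$$
the final equality being the definition (\ref{V_def}) of $V$ as the value against $\hat{\mu}$ at the point $x_{N,0}^i$. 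On the other hand, $\hat{\varrho}_N^i=N\,\Theta\#\chi_N^i$ is concentrated on controls $\Theta[x(\cdot),z(\cdot)]$ optimal against $\hat{\mu}$ for the initial point $x(0)$; applying such a control from $x_{N,0}^i$ loses at most the initial-shift error, and averaging against $N\chi_N^i$, using $e_0\#\chi_N^i=m_N^i$ and the Lipschitz bound on $V$, gives
$$\int_{\mathcal{U}}J_N^i(\alpha,\hat{\mu})\hat{\varrho}_N^i(d\alpha)\ge V(0,x_{N,0}^i)-C\,N\int_G\|x-x_{N,0}^i\|m_N^i(dx),$$
whence $\bar{J}_N^i(\hat{\varrho}_N)\ge V(0,x_{N,0}^i)-C\,N\max_i\int_G\|x-x_{N,0}^i\|m_N^i(dx)-C\,\mathcal{W}(\hat{\nu}_N,\hat{\mu})$.

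Subtracting the two displays and inserting the field estimates of the first step collapses every term into a multiple of $W(m_0,\delta_{\mathbf{x}}^N)$, of $N\max_i\int_G\|x-x_{N,0}^i\|m_N^i(dx)$, and of $1/N$, which is the asserted inequality; the $\varepsilon$-Nash conclusion follows by letting $N\to\infty$ under the stated hypotheses. I expect the main obstacle to be the field-stability step: one must control not merely the perturbation of the deviator's own contribution but the re-equilibration of the whole empirical field caused by a single $1/N$-deviation, i.e.\ arrange the term-by-term coupling so that Gronwall's inequality (\ref{x_estima_gronwall}) closes \emph{uniformly} in $\varrho_N^i$, while simultaneously keeping the two distinct initial-condition errors separate — $W(m_0,\delta_{\mathbf{x}}^N)$ from the global matching and $N\max_i\int\|x-x_{N,0}^i\|m_N^i$ from the per-player matching inside the optimality estimate.
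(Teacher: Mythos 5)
Your proposal is correct and follows essentially the same route as the paper: the same two field-stability lemmas (closeness of $\hat{\nu}_N$ and of the deviated field $\tilde{\nu}_{N|j}$ to $\hat{\mu}$, obtained by coupling through the measures $\chi_N^i$ and closing with Gronwall), the same Lipschitz transfer of these bounds to the payoffs, and the same exploitation of the optimality of the $\Theta$-selected controls against $\hat{\mu}$. The only difference is one of bookkeeping: you pivot both payoff comparisons through the value $V(0,x_{N,0}^i)$ (using its Lipschitz continuity in $x$), whereas the paper pivots through the $m_N^j$-averaged ideal payoff and disintegrates the deviation measure $\varrho_N^j$ into a system $\rho_{x_*}$ before integrating the pointwise optimality inequality; both yield the same three error terms.
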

The proof of the Theorem requires some preliminary lemmas.

Recall the denotation $C_1=L_{f,m}e^{L_{f,x}T}$. Moreover, put $C_2=e^{L_{f,x}T}$.
\begin{Lm}\label{lm_motin_estima} For any external field $\mu\in\mathcal{M}$, and any control measure $\alpha\in\mathcal{U}$ the following estimate holds:
$$\|x(t,0,x_1,\alpha,\mu)-\hat{x}(t,0,x_2,\alpha)\|\leq C_2\|x_1-x_2\|+C_1\int_0^tW(\hat{\mu}[\tau],\mu[\tau])d\tau $$
\end{Lm}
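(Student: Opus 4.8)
The plan is to run a standard Gronwall argument on the difference of the two trajectories. Write $y_1(\cdot)=x(\cdot,0,x_1,\alpha,\mu)$ and $y_2(\cdot)=\hat{x}(\cdot,0,x_2,\alpha)=x(\cdot,0,x_2,\alpha,\hat{\mu})$. By definition both satisfy the integral equations
$$y_1(t)=x_1+\itp{t}f(\tau,y_1(\tau),\mu[\tau],u)\alpha(d(\tau,u)),\qquad y_2(t)=x_2+\itp{t}f(\tau,y_2(\tau),\hat{\mu}[\tau],u)\alpha(d(\tau,u)).$$
Subtracting these and inserting the intermediate term $f(\tau,y_2(\tau),\mu[\tau],u)$, I would bound the integrand by the triangle inequality together with the two Lipschitz conditions on $f$ from assumption~3, namely
$$\|f(\tau,y_1(\tau),\mu[\tau],u)-f(\tau,y_2(\tau),\hat{\mu}[\tau],u)\|\leq L_{f,x}\|y_1(\tau)-y_2(\tau)\|+L_{f,m}W(\mu[\tau],\hat{\mu}[\tau]).$$

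The key technical point is that $\alpha\in\mathcal{U}=\Lambda([0,T],\lambda,P)$ has Lebesgue measure as its first marginal, i.e. $\alpha(Y\times P)=\lambda(Y)$. Hence any integrand that does not depend on the control variable $u$ integrates against $\alpha$ over $[0,t]\times P$ exactly as it would against Lebesgue measure over $[0,t]$; this collapses the two bounds above into ordinary time integrals and yields
$$\|y_1(t)-y_2(t)\|\leq\|x_1-x_2\|+L_{f,x}\int_0^t\|y_1(\tau)-y_2(\tau)\|d\tau+L_{f,m}\int_0^tW(\mu[\tau],\hat{\mu}[\tau])d\tau.$$

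Finally I would apply Gronwall's inequality with the nondecreasing forcing term $a(t)=\|x_1-x_2\|+L_{f,m}\int_0^tW(\mu[\tau],\hat{\mu}[\tau])d\tau$, obtaining $\|y_1(t)-y_2(t)\|\leq a(t)e^{L_{f,x}t}$. Bounding $e^{L_{f,x}t}$ by $e^{L_{f,x}T}$ and recalling $C_2=e^{L_{f,x}T}$, $C_1=L_{f,m}e^{L_{f,x}T}$, together with the symmetry $W(\mu[\tau],\hat{\mu}[\tau])=W(\hat{\mu}[\tau],\mu[\tau])$, gives exactly the claimed estimate. There is no serious obstacle: the only subtlety worth stating explicitly is the marginal property of the control measure that legitimizes replacing the $\alpha$-integrals by Lebesgue integrals, and the fact that Gronwall must be applied in the form valid for a nondecreasing (but not constant) forcing term so that the dependence on $W(\mu[\tau],\hat{\mu}[\tau])$ is carried through correctly rather than being lumped into a single constant.
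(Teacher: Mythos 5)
Your proposal is correct and follows essentially the same route as the paper: subtract the two integral equations, apply the two Lipschitz bounds on $f$ to reduce to a scalar integral inequality, and conclude with Gronwall's inequality in the form with a nondecreasing forcing term. The paper performs these steps without commenting on the marginal property of $\alpha$ that justifies replacing the $\alpha$-integrals by Lebesgue integrals, so your explicit remark on that point is a small but welcome clarification rather than a deviation.
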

\begin{proof} Denote $x_1(\cdot)=x(\cdot,0,x_1,\alpha,\mu)$, $x_2(\cdot)=\hat{x}(\cdot,0,x_2,\alpha)$.
We have that
\begin{multline*}\|x_1(t)-x_2(t)\|\leq\|x_1-x_2\|\\+\itp{t} \|f(\tau,x_1(\tau),\mu[\tau],u)- f(\tau,x_2(\tau),\hat{\mu}[\tau],u)\|\alpha(d(\tau,u))\\
\leq \|x_1-x_2\|+\int_0^tL_{f,x}\|x_1(\tau)-x_2(\tau)\|d\tau+\int_0^t L_{f,m}W(\mu[\tau],\hat{\mu}[\tau])d\tau. \end{multline*}
Using Gronwall inequality we get that
\begin{equation}\label{x_N_hat_x_estimate}
  \|x_1(t)-x_2(t)\|\leq \left(\|x_1-x_2\|+L_{f,m}\int_0^tW(\mu[\tau],\hat{\mu}[\tau])d\tau\right)e^{L_{f,x}T}
\end{equation}
 Hence the conclusion of the Lemma follows.

\end{proof}

\begin{Lm}\label{lm_measure_nash} There exists a constant $C_3$ such that
$$W(\hat{\mu}[t],\hat{\nu}_N[t])\leq C_3W(m_0,\delta_{\mathbf{x}}^N). $$
\end{Lm}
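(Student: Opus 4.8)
The plan is to estimate $W(\hat{\mu}[t],\hat{\nu}_N[t])$ through the Kantorovich--Rubinstein duality, decompose the difference into a sum of per-player contributions, and reduce everything to the trajectory bound of Lemma \ref{lm_motin_estima}. Because the finite-game field $\hat{\nu}_N$ enters the dynamics on the right-hand side, the resulting inequality will be self-referential, and I would close it with Gronwall's inequality. Concretely, for any $\phi\in{\rm Lip}_1$ I would write
\[
\int_{\mathbb{R}^n}\phi\,\hat{\mu}[t](dx)-\int_{\mathbb{R}^n}\phi\,\hat{\nu}_N[t](dx)=\sum_{i=1}^N\left(\int_{\mathbb{R}^n}\phi\,\mu_N^i[t](dx)-\int_{\mathbb{R}^n}\phi\,\hat{\nu}_N^i[t](dx)\right),
\]
using the additive decompositions $\hat{\mu}[t]=\sum_i\mu_N^i[t]$ and $\hat{\nu}_N[t]=\sum_i\hat{\nu}_N^i[t]$ recorded above.

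The crucial step is to represent both $\mu_N^i[t]$ and $\hat{\nu}_N^i[t]$ against one and the same family of control distributions. Disintegrating $\chi_N^i$ over the initial point with respect to $m_N^i$ and pushing forward by the selector $\Theta$, I obtain, using the defining property $\hat{x}(t,0,x_*,\Theta[x(\cdot),z(\cdot)])=x(t)$ for trajectories in $\mathcal{S}[V,0,x_*,\hat{\mu}]$,
\[
\int_{\mathbb{R}^n}\phi\,\mu_N^i[t](dx)=\int_{G_0}m_N^i(dx_*)\int_{\mathcal{U}}\phi\bigl(\hat{x}(t,0,x_*,\alpha)\bigr)\,(\Theta\#\chi_{0,x_*})(d\alpha).
\]
On the other hand, unwinding the definition of $\hat{\nu}_N^i$ together with $\hat{\varrho}_N^i=N\,\Theta\#\chi_N^i$ gives
\[
\int_{\mathbb{R}^n}\phi\,\hat{\nu}_N^i[t](dx)=\int_{G_0}m_N^i(dx_*)\int_{\mathcal{U}}\phi\bigl(x(t,0,x_{N,0}^i,\alpha,\hat{\nu}_N)\bigr)\,(\Theta\#\chi_{0,x_*})(d\alpha).
\]
The two integrands differ only in the starting point ($x_*$ versus the atom $x_{N,0}^i$) and in the driving field ($\hat{\mu}$ versus $\hat{\nu}_N$), so they are tailor-made for Lemma \ref{lm_motin_estima}.

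For fixed $x_*$ and $\alpha$, Lemma \ref{lm_motin_estima} bounds the distance between the two trajectories by $C_2\|x_*-x_{N,0}^i\|+C_1\int_0^t W(\hat{\mu}[\tau],\hat{\nu}_N[\tau])\,d\tau$; since $\phi\in{\rm Lip}_1$ and each $\chi_{0,x_*}$ is a probability measure, integrating against $m_N^i$ and summing over $i$ yields
\[
W(\hat{\mu}[t],\hat{\nu}_N[t])\le C_2\sum_{i=1}^N\int_{G_0}\|x-x_{N,0}^i\|\,m_N^i(dx)+C_1\int_0^t W(\hat{\mu}[\tau],\hat{\nu}_N[\tau])\,d\tau.
\]
By part (3) of Lemma \ref{lm_measure_repres} the first sum equals $W(m_0,\delta_{\mathbf{x}}^N)$, after which Gronwall's inequality gives $W(\hat{\mu}[t],\hat{\nu}_N[t])\le C_2e^{C_1T}W(m_0,\delta_{\mathbf{x}}^N)$, so $C_3=C_2e^{C_1T}$ works. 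The only genuinely delicate point is the double representation in the second paragraph: one must verify that the same disintegration $\chi_{0,x_*}$ and the same selector $\Theta$ govern both the infinite-population field $\hat{\mu}$ (through $\chi_N^i$) and the finite strategy $\hat{\varrho}_N^i$, so that the two measures are coupled trajectory by trajectory and their difference collapses cleanly into an initial-data discrepancy plus a field discrepancy amenable to Gronwall.
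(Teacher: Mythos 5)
Your proposal is correct and follows essentially the same route as the paper's own proof: Kantorovich--Rubinstein duality, a per-player decomposition coupled trajectory-by-trajectory through the selector $\Theta$ and the disintegration over initial points, the trajectory estimate of Lemma \ref{lm_motin_estima}, part (3) of Lemma \ref{lm_measure_repres}, and finally Gronwall, yielding the same constant $C_3=C_2e^{C_1T}$. The only difference is presentational — you make explicit the disintegration via $\chi_{0,x_*}$ and $\Theta\#\chi_{0,x_*}$ where the paper integrates directly against $\chi_N^i$ over $\mathcal{S}[V,\hat{\mu}]$ — but the underlying coupling is identical.
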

\begin{proof} The definitions of measures $\chi_N^i$, and $\hat{\nu}_N$ yield the inequality
\begin{multline}\label{W_estimate_N_nash_1}W(\hat{\nu}_N[t],\hat{\mu}[t])= \sup\left\{\int_{\mathbb{R}^n}\phi(x)\hat{\nu}_N[t](dx)-\int_{\mathbb{R}^n}\phi(x)\hat{\mu}[t](dx):\phi\in{\rm Lip}_1\right\}\\=
\left\{\sum_{i=1}^N\int_{\mathcal{S}[V,\hat{\mu}]} (\phi(\hat{\xi}_N^i(t,\Theta[x(\cdot),z(\cdot)]))-\phi(x(t)))\chi_N^i(d(x(\cdot),z(\cdot))):\phi\in{\rm Lip}_1\right\}\\ \leq \sum_{i=1}^N\int_{\mathcal{S}[V,\hat{\mu}]} \|\hat{\xi}_N^i(t,\Theta[x(\cdot),z(\cdot)])-x(t)\|\chi_N^i(d(x(\cdot),z(\cdot)))
\end{multline}

For $\alpha=\Theta[x(\cdot),z(\cdot)]$ and $x_*=x(0)$ we have that
$\xi_N^i(t,\alpha)=x(t,0,x_{N,0}^i,\alpha,\hat{\nu}_N)$, $\hat{x}(t)=\hat{x}(t,0,x_*,\alpha)$. It follows from Lemma \ref{lm_motin_estima} that
$$\|\hat{\xi}_N^i(t,\Theta[x(\cdot),z(\cdot)])-x(t)\| \leq C_2\|x_{N,0}^i-x_*\|+C_1\int_0^tW(\nu_N[\tau],\hat{\mu}[\tau])d\tau.$$

From this and estimate (\ref{W_estimate_N_nash_1}) we get the estimate
$$%\begin{multline*}
W(\hat{\nu}_N[t],\hat{\mu}[t])\leq
C_2\sum_{i=1}^n\int_{G_0}\|x_{N,0}^i-x_*\|m_N^i(dx_*) +C_1\int_0^tW(\nu_N[\tau],\hat{\mu}[\tau])d\tau. $$%\end{multline*}
By Lemma \ref{lm_measure_repres} we conclude that
$$W(\hat{\nu}_N[t],\hat{\mu}[t])\leq C_2W(m_0,\delta_{\mathbf{x}}^N)+C_1\int_0^tW(\nu_N[\tau],\hat{\mu}[\tau])d\tau.  $$ From this and Gronwall's inequality the conclusion of the Lemma follows with $C_3=C_2\exp(C_1T)$.
\end{proof}

\begin{Lm}\label{lm_measure_deviat} There exists a constant $C_4$ such that for all $\varrho_{N}^j$
$$W(\tilde{\nu}_{N|j}(t,\cdot;\varrho_{N}^i),\hat{\mu}[t])\leq C_3W(m_0,\delta_{\mathbf{x}}^N)+\frac{C_4}{N}. $$
\end{Lm}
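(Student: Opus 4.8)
The plan is to compare the deviated field $\tilde{\nu}_{N|j}$ not directly with $\hat{\mu}$, but with the equilibrium finite-player field $\hat{\nu}_N$, and then to invoke Lemma \ref{lm_measure_nash} through the triangle inequality
$$W(\tilde{\nu}_{N|j}(t,\cdot;\varrho_N^j),\hat{\mu}[t])\leq W(\tilde{\nu}_{N|j}[t],\hat{\nu}_N[t])+W(\hat{\nu}_N[t],\hat{\mu}[t]).$$
The second term is already bounded by $C_3W(m_0,\delta_{\mathbf{x}}^N)$ by Lemma \ref{lm_measure_nash}, so it remains to show that a single unilateral deviation changes the finite-player field by at most $C_4/N$, i.e. $W(\tilde{\nu}_{N|j}[t],\hat{\nu}_N[t])\leq C_4/N$.

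To obtain this I would split both fields over the $N$ players, using $\hat{\nu}_N=\sum_{i=1}^N\hat{\nu}_N^i$ and $\tilde{\nu}_{N|j}=\sum_{i=1}^N\tilde{\nu}_{N|j}^i$, where each summand carries mass $1/N$. For a test function $\phi\in{\rm Lip}_1$, normalized to vanish at a fixed point of the compact $G$ (this does not change $\int\phi\,d(\tilde{\nu}_{N|j}-\hat{\nu}_N)$ since both measures have unit mass, and it guarantees $|\phi|\leq\mathrm{diam}(G)$ on $G$), I write the difference $\int\phi\,d(\tilde{\nu}_{N|j}[t]-\hat{\nu}_N[t])$ as a sum over $i$. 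For every non-deviating index $i\neq j$ the strategy $\hat{\varrho}_N^i$ is unchanged, so the trajectories $\tilde{\xi}_{N|j}^i(t,\alpha)$ and $\hat{\xi}_N^i(t,\alpha)$ start from the same point $x_{N,0}^i$ with the same control $\alpha$ and differ only through the field they respond to; estimate (\ref{x_estima_gronwall}) then yields
$$\|\tilde{\xi}_{N|j}^i(t,\alpha)-\hat{\xi}_N^i(t,\alpha)\|\leq C_1\int_0^tW(\tilde{\nu}_{N|j}[\tau],\hat{\nu}_N[\tau])d\tau.$$
For the single deviating index $i=j$ the two contributions need not be close, but each has mass $1/N$ and is supported in $G$, so their difference contributes at most $2\,\mathrm{diam}(G)/N$.

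Summing the $N-1$ Lipschitz terms (each weighted by $1/N$) together with the one deviating term of order $1/N$, and taking the supremum over $\phi\in{\rm Lip}_1$, I arrive at
$$W(\tilde{\nu}_{N|j}[t],\hat{\nu}_N[t])\leq C_1\int_0^tW(\tilde{\nu}_{N|j}[\tau],\hat{\nu}_N[\tau])d\tau+\frac{2\,\mathrm{diam}(G)}{N}.$$
Gronwall's inequality then gives $W(\tilde{\nu}_{N|j}[t],\hat{\nu}_N[t])\leq C_4/N$ with $C_4=2\,\mathrm{diam}(G)\,e^{C_1T}$, and combining this with the triangle inequality above delivers the claimed bound.

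The main obstacle is the self-consistency of the field: since each player's trajectory depends on the aggregate distribution, the perturbation caused by player $j$ does not remain confined to the $j$-th summand but feeds back into the dynamics of all other players through the field itself. The Gronwall step is exactly what absorbs this feedback, while the structural fact that keeps the final estimate at order $1/N$ is that each individual player contributes only mass $1/N$ to the empirical distribution.
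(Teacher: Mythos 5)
Your argument is correct, but it is organized differently from the paper's. The paper compares $\tilde{\nu}_{N|j}(t,\cdot;\varrho_N^j)$ with $\hat{\mu}[t]$ in one step: it splits the Kantorovich--Rubinstein supremum over the $N$ summands, applies Lemma \ref{lm_motin_estima} to the non-deviating indices (which is where the $C_2\|x_{N,0}^i-x_*\|$ terms, and hence the $W(m_0,\delta_{\mathbf{x}}^N)$ contribution, enter), bounds the $j$-th summand by ${\rm diam}(G)/N$, and runs Gronwall on $t\mapsto W(\tilde{\nu}_{N|j}(t,\cdot;\varrho_N^j),\hat{\mu}[t])$ itself --- essentially repeating the proof of Lemma \ref{lm_measure_nash} with one extra term. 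You instead insert the undeviated finite-player field $\hat{\nu}_N$ as an intermediate object, prove the standalone estimate $W(\tilde{\nu}_{N|j}[t],\hat{\nu}_N[t])\leq C_4/N$ (Gronwall applied to the distance between the two finite-player systems, where the non-deviating players share initial data and strategies and differ only through the field), and then conclude by the triangle inequality and Lemma \ref{lm_measure_nash}. Both routes rest on the same two facts --- the Gronwall absorption of the field feedback via estimate (\ref{x_estima_gronwall}) and the $1/N$ mass of the single deviator --- and both produce a bound of the claimed form with the same $C_3$ (your $C_4=2\,{\rm diam}(G)e^{C_1T}$ is twice the paper's, which is immaterial). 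Your version is more modular: it isolates the cleanly quotable fact that a unilateral deviation perturbs the empirical field by $O(1/N)$ uniformly in $t$ and in the deviation, and it reuses Lemma \ref{lm_measure_nash} verbatim rather than re-deriving its content; the price is one extra triangle inequality. The paper's version avoids introducing the auxiliary comparison but duplicates the mechanics of the previous lemma. Your normalization of $\phi\in{\rm Lip}_1$ to vanish at a point of $G$ is legitimate since both measures have equal total mass, so the constant shift does not affect the integral of the difference.
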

\begin{proof}The proof is analogous to the proof of the previous Lemma.
As above, we have that
\begin{multline}\label{W_estima_dev}
  W(\tilde{\nu}_{N|j}(t,\cdot;\varrho_N^j),\hat{\mu}[t]) \\
  \leq \sum_{i\neq j}\int_{\mathcal{S}[V,\hat{\mu}]} \|\tilde{\xi}_{N|j}^i(t,\Theta[x(\cdot),z(\cdot)];\varrho_N^j)-x(t)\|\chi_N^i(d(x(\cdot),z(\cdot)))\\+ \sup\left\{\int_{\mathbb{R}^n}\phi(x')\tilde{\nu}_{N|j}^j(t,dx';\varrho_N^j)- \int_{\mathbb{R}^n}\phi(x'')\hat{\mu}_N^j(t,dx''):\phi\in {\rm Lip}_1\right\}.
\end{multline}
Further,
\begin{multline*}\sup\left\{\int_{\mathbb{R}^n}\phi(x')\tilde{\nu}_{N|j}^j(t,dx';\varrho_N^j)- \int_{\mathbb{R}^n}\phi(x'')\hat{\mu}_N^j[t](dx):\phi\in {\rm Lip}_1\right\}\\ =\left\{\int_{G\times G} N(\phi(x')-\phi(x''))\tilde{\nu}_{N|j}^j(t,dx';\varrho_N^j)\hat{\mu}_N^j(t,dx''):\phi\in {\rm Lip}_1\right\}\leq \frac{{\rm diam}(G)}{N}. \end{multline*} Here we use the denotation $${\rm diam}(G)=\sup\{\|x'-x''\|:x',x''\in G\}. $$

From this, (\ref{W_estima_dev}) and Lemma \ref{lm_motin_estima} we get the estimate
$$W(\tilde{\nu}_{N|j}(t,\cdot;\varrho_N^j),\hat{\mu}[t])\leq \frac{{\rm diam}(G)}{N}+C_2W(m_0,\delta_{\mathbf{x}}^N)+C_1\int_0^tW(\tilde{\nu}_{N|j}(\tau,\cdot;\varrho_N^j),\hat{\mu}[\tau])d\tau. $$ Therefore, the  Lemma is valid for $C_4={\rm diam}(G)\exp(C_1T)$.
\end{proof}

\begin{proof}[Proof of Theorem \ref{th_N_eq}]
First we consider the case when all players use the strategies determined by the profile $\hat{\varrho}_N$. Denote $\hat{\rho}_{x}=\Theta\#\chi_{0,x}$. We have that $\hat{\rho}_x$ is a probabilistic measure on $\mathcal{U}$.

From Lemma \ref{lm_motin_estima} it follows that
$$\|\hat{x}(t,0,x_*,\alpha)-\hat{\xi}_N^i(t,\alpha)\|\leq C_2\|x_*-x_{N,0}^i\|+C_1t\mathcal{W}(\hat{\mu},\hat{\nu}_N). $$ From this and Lemma \ref{lm_measure_nash} we get the inequality
\begin{equation}\label{x_xi_nash}
  \|\hat{x}(t,0,x_*,\alpha)-\hat{\xi}_N^i(t,\alpha)\|\leq C_2\|x_*-x_{N,0}^i\|+C_1C_3tW(m_0,\delta_{\mathbf{x}}^N).
\end{equation} Denote $C_5=C_1C_3T$.

Moreover, \begin{multline}\label{z_zeta_nash}\|\hat{w}(T,0,x_*,\alpha)-\hat{\zeta}_N^i(T,\alpha)\|\\\leq \int_{0}^T (L_{g,x}\|\hat{x}(t,0,x_*,\alpha)-\hat{\xi}_N^i(t,\alpha)\|+L_{g,m}W(\hat{\mu}[t],\hat{\nu}_N[t]))dt\\ \leq L_{g,x}C_2T\|x_*-x_{N,0}^i\|+T^2C_1C_3(L_{g,x}+L_{g,m})W(m_0,\delta_{\mathbf{x}}^N). \end{multline}
In addition, denote $C_6=L_{g,x}C_2T$, $C_7=T^2C_1C_3(L_{g,x}+L_{g,m})$.

Therefore,
\begin{multline}\label{main_estimate_nash}\left|\bar{J}_N^j(\hat{\varrho}_N)- N\int_{G_0}m_N^j(dx_*)\int_{\mathcal{U}}\left[\sigma(\hat{x}(T,0,x_*,\alpha),\hat{\mu}[T])+ \hat{w}(T,0,x_*,\alpha)\right])\hat{\rho}_{x_*}(\alpha)\right|\\=
N\int_{G_0}m_N^j(dx_*)
\int_{\mathcal{U}}|\sigma(\hat{\xi}_N^i(T,\alpha),\hat{\mu}[T])+ \hat{\zeta}(T,\alpha)\\-\sigma(\hat{x}(T,0,x_*,\alpha),\hat{\mu}[T])-\hat{w}(T,0,x_*,\alpha)|\hat{\rho}_{x_*}(\alpha)\\ \leq N\int_{G_0}((L_{\sigma,x}C_2+C_6)\|x_*-x_{N,0}^i\|+(L_{\sigma,x}C_5+L_{\sigma,m}C_3+C_7) W(m_0,\delta_{\mathbf{x}}^N))m_N^j(dx_*)\\=(L_{\sigma,x}C_2+C_6)N\int_{G_0}\|x_*-x_{N,0}^i\|m_N^i(dx_*)+(L_{\sigma,x}C_5+L_{\sigma,m}C_3+C_7) W({m}_0,\delta_{\mathbf{x}}^N)). \end{multline}

Now consider the case when the player $j$ deviates.

Since $\hat{\varrho}_{x_*}$ is concentrated on the set of optimal controls, we have that for a sample player starting from $(0,x_*)$, $\alpha\in {\rm supp}(\hat{\rho}_{x_*})$ and all $\beta\in\mathcal{U}$ the following inequality holds true:
  \begin{equation}\label{beta_alpha_ineq}
  \sigma(\hat{x}(T,0,x_*,\beta),\hat{\mu}[T])+\hat{w}[T,0,x_*,\beta]\leq \sigma(\hat{x}(T,0,x_*,\alpha),\hat{\mu}[T])+\hat{w}(T,0,x_*,\alpha).
\end{equation}
Let for each $x_*$ $\rho_{x_*}$ be a probability measure on $\mathcal{U}$. Integrating  inequality (\ref{beta_alpha_ineq}) with respect to $\hat{\rho}_{x_*}(d\alpha)$, $\rho_{x_*}(d\beta)$, and $m_N^j(dx_*)$, we get the inequality
\begin{multline}\label{inequality_rho_rho}
 \int_{G_0}m_N^j(dx_*)\int_{\mathcal{U}}(\sigma(\hat{x}(T,0,x_*,\beta),\hat{\mu}[T])+ \hat{z}(T,0,x_*,\beta))\rho_{x_*}(d\beta)\\ \leq \int_{G_0}m_N^j(dx_*)\int_{\mathcal{U}}(\sigma(\hat{x}(T,0,x_*,\alpha),\hat{\mu}[T])+ \hat{z}(T,0,x_*,\alpha))\hat{\rho}_{x_*}(d\alpha).
\end{multline}

Now, from Lemma \ref{lm_motin_estima} we have that
$$\|\hat{x}(t,0,x_*,\beta)-\tilde{\xi}_{N|j}^j(t,\beta;\varrho_N^j)\|\leq C_2\|x_*-x_{N,0}^i\|+C_1t\mathcal{W}(\hat{\mu},\tilde{\nu}_{N|j}(\cdot,\cdot;\varrho_N^j)). $$

Further, it follows from Lemma \ref{lm_measure_deviat} that
\begin{equation}\label{x_xi_dev}
\|\hat{x}(t,0,x_*,\beta)-\tilde{\xi}_{N|j}^j(t,\beta;\varrho_N^j)\|\leq C_2\|x_*-x_{N,0}^i\|+C_1tC_3W(m_0,\delta_{\mathbf{x}}^N)+\frac{C_1C_4t}{N}.
\end{equation}

As above we have that
\begin{multline}\label{z_zeta_dev}\|\hat{w}(T,0,x_*,\beta)-\tilde{\zeta}_{N|j}^j(T,\beta;\varrho_N^j)\|
\\\leq \int_{0}^T (L_{g,x}\|\hat{x}(t,0,x_*,\beta)-\tilde{\xi}_N^i(t,\beta;\varrho_N^j)\| +L_{g,m}W(\hat{\mu}[t],\tilde{\nu}_{N|j}(t,\cdot;\varrho_N^j)))dt\\ \leq C_6\|x_*-x_{N,0}^j\|+C_7W(m_0,\delta_{\mathbf{x}}^N)+(L_{g,m}C_4T+C_1C_4T^2L_{g,x})\frac{1}{N}. \end{multline} Denote $C_8=L_{g,m}C_4T+C_1C_4T^2L_{g,x}$.

Hence, we have the estimate
\begin{multline}\label{inequality_sigma_sigma}
  |\sigma(\hat{x}(T,0,x_*,\beta),\hat{\mu}[T])+\hat{w}(T,0,x_*,\beta)- \sigma(\tilde{\xi}_N^j(T,\beta;\varrho_N^j),\tilde{\nu}_N(T,\cdot;\varrho_N^j))- \tilde{\zeta}_N^j(T,\beta;\varrho_N^j)|\\ \leq
  (L_{\sigma,x}C_2+C_6)\|x_*-x_{N,0}^j\|+(L_{\sigma,x}C_5+L_{\sigma,m}C_3+C_7)W(m_0,\delta_{\mathbf{x}}^N)\\+ (L_{\sigma,x}C_1C_4T+L_{\sigma,m}C_4+ C_8)\frac{1}{N}
\end{multline}

It follows from \cite[Theorem 10.4.6]{Bogachev} that for any measure $\varrho_N^j\in\mathcal{P}(\mathcal{U})$ there exists a system of measures $\rho_{x_*}\in\mathcal{P}(\mathcal{U})$ such that for any function $\varphi\in C(\mathcal{U})$
$$\int_{\mathcal{U}}\varphi(\beta)\varrho_N^j(\beta)=N\int_{G_0}m_N^j(dx_{*})\int_{\mathcal{U}}\varphi(\beta)\rho_{x_*}(d\beta). $$
From this (\ref{inequality_rho_rho}), (\ref{inequality_sigma_sigma}) and substitution of the value of $\bar{J}_N^j(\hat{\varrho}_N|\varrho_N^j)$, we get the estimate
\begin{multline*}
\left|\bar{J}_N^j(\hat{\varrho}_N|\varrho_N^j)-N \int_{G_0}m_N^j(dx_*)\int_{\mathcal{U}} (\sigma(\hat{x}(T,0,x_*,\beta),\hat{\mu}[T])+\hat{w}(T,0,x_*,\beta))\rho_{x_*}(d\beta)\right|\\\leq
(L_{\sigma,x}C_2+C_6)N\int_{G_0}\|x_*-x_{N,0}^j\|m_N^j(dx_*) +(L_{\sigma,x}C_5+L_{\sigma,m}C_3+C_7)W(m_0,\delta_{\mathbf{x}}^N)\\ +(L_{\sigma,x}C_1C_4T+L_{\sigma,m}C_4+ C_8)\frac{1}{N}
\end{multline*}

This,  (\ref{main_estimate_nash}), and (\ref{inequality_rho_rho}) yield that
\begin{multline*}
  \bar{J}_N^j(\hat{\varrho}_N)\geq \bar{J}_N^j(\hat{\varrho}_N|\varrho_N^j) -2(L_{\sigma,x}C_2+C_6)N\int_{G_0}\|x_*-x_{N,0}^j\|m_N^j(dx_*) \\ -2(L_{\sigma,x}C_5+L_{\sigma,m}C_3+C_7)W(m_0,\delta_{\mathbf{x}}^N) -(L_{\sigma,x}C_1C_4T+L_{\sigma,m}C_4+ C_8)\frac{1}{N}.
\end{multline*}
This inequality is the conclusion of the Theorem for $\hat{C}_1=2C_8$, $\hat{C_2}=2(L_{\sigma,x}C_2+C_6)$ and $\hat{C_3}=C_9$.
\end{proof}

\end{document}